\numberwithin{equation}{section}
\newtheorem*{theorem*}{Theorem}
\newtheorem*{corollary*}{\bf Corollary}
\newtheorem{theorem}{Theorem}[section]
\newtheorem{corollary}[theorem]{Corollary}
\newtheorem{definition}[theorem]{Definition}
\newtheorem{example}[theorem]{Example}
\newtheorem{lemma}[theorem]{Lemma}
\newtheorem{proposition}[theorem]{Proposition}
\newtheorem{remark}[theorem]{Remark}
\title[A note on toric degeneration of a BSDH variety]{A note on toric degeneration of a Bott-Samelson-Demazure-Hansen variety}
 \author[B.N. Chary]{B. Narasimha Chary}
\address{%
B. Narasimha Chary\\
Institut Fourier, UMR 5582 du CNRS\\
Universit{\'e} de Grenoble Alpes\\
 CS 40700, 38058\\
Grenoble cedex 09, France.\\
Email: narasimha-chary.bonala@univ-grenoble-aples.fr
}
\curraddr{
Fakult\"{a}t f\"{u}r Mathematik\\
Ruhr-Universit\"{a}t Bochum, Bochum\\
Germany\\
Email: Narasimha.bonala@rub.de
}
\begin{document}
 
\maketitle

\begin{abstract} In this paper we study the geometry of toric degeneration 
of a Bott-Samelson-Demazure-Hansen (BSDH) variety, which was algebraically constructed in \cite{pasquier2010vanishing} and \cite{parameswaran2016toric}.
We give some applications to BSDH varieties.
Precisely, we classify Fano, weak Fano and log Fano BSDH varieties and their toric limits in Kac-Moody setting. 
We prove some vanishing theorems for the cohomology of tangent bundle (and line bundles) on BSDH varieties.
We also recover the results in \cite{parameswaran2016toric}, by toric methods.
\end{abstract}
\let\thefootnote\relax\footnotetext{The author was supported by AGIR Pole MSTIC project run by the University of Grenoble Alpes, France.} 

{\bf Keywords:} Bott-Samelson-Demazure-Hansen varieties, canonical line bundle, tangent bundle and toric varieties.
%\tableofcontents
\section{Introduction}\label{intro}

Bott-Samelson-Demazure-Hansen (for short, BSDH) varieties are natural desingularizations of Schubert varieties
in the flag varieties. They were first introduced by Raoul Bott and Hans Samelson in a differential geometric and topological context (see \cite{bott1958}).  These were algebraically constructed by 
M. Demazure and H.C. Hansen independently by 
adapting a differential geometric approach from the paper
of  Bott and Samelson (see \cite{demazure1974} and \cite{hansen}). These are also known as Bott-Samelson varieties.  In \cite{brion}, these varieties are named as Bott-Samelson-Demazure-Hansen (BSDH) varieties when we work in the setting of algebraic geometry. 
Briefly, the BSDH varieties are iterated projective line bundles, given by factoring the 
Schubert variety using Bruhat decomposition.
These varieties  
%highly 
depend on the given expression of the Weyl group element corresponding to the
Schubert variety (see for instance \cite[Page 32]{Chary1}). 
We also see in this paper some properties of these varieties which depend on
the given expression.

In \cite{grossberg1994bott}, M. Grossberg and Y. Karshon constructed  toric degenerations of BSDH varieties
by complex geometric methods.
In \cite{pasquier2010vanishing} B. Pasquier and in 
\cite{parameswaran2016toric} 
A.J. Parameswaran and P. Karuppuchamy constructed these toric degenerations algebraically. 
B. Pasquier used 
these  degenerations to obtain vanishing results for the cohomology of line bundles on BSDH varieties (see \cite{pasquier2010vanishing}). 
When $G$ is a simple algebraic group and the expression $\tilde w$ is reduced,
Fanoness and weak Fanoness of the BSDH variety $Z(\tilde w)$ are considered in \cite{Charyfano}.
Here we consider the case of Kac-Moody setting and the expression $\tilde w$ is not necessarily reduced.
In \cite{parameswaran2016toric}, the authors got a Fanoness criterion of the 
limiting toric variety for a simple simply connected algebraic group 
by geometric methods. 
In this paper we study the limiting toric variety of a BSDH variety in more detail by methods of toric geometry
and we prove some applications to BSDH varieties.
We also recover the results in \cite{parameswaran2016toric} and extend them to the Kac-Moody setting.
The key idea for many results in this article is that the toric limit is a \textquoteleft Bott tower'. 
These are studied in \cite{Moriconeoftowers} and some of their properties can be 
transferred to BSDH varieties by using the semi-continuity theorem.

Let $G$ be a Kac-Moody group over the field of complex numbers
(for the definition 
see \cite{kumar2012kac}).
%for $k=\mathbb C$  and for arbitrary $k$ see \cite{Ti1} and \cite{Ti2}). 
Let $B$ be a Borel subgroup containing a fixed 
maximal torus $T$. Let $W$ be the Weyl group corresponding to the pair $(G, B, T)$ and let $w\in W$. Let $\tilde w:=s_{\beta_1}\cdots s_{\beta_n}$ be an 
expression (possibly non-reduced) of $w$ in simple reflections and let $Z(\tilde w)$ be the BSDH variety corresponding to $\tilde w$ (see Section 2). Let 
$Y_{\tilde w}$ be the toric limit of $Z(\tilde w)$ constructed as in \cite{pasquier2010vanishing} or \cite{parameswaran2016toric}
(see Section \ref{degeneration}).
We see that $Y_{\tilde w}$ is a Bott tower, the iterated $\mathbb P^1$-bundle over a point $\{pt\}$ where each $\mathbb P^1$-bundle is the projectivization of a rank $2$ decomposable vector bundle
(see Corollary \ref{tl}).
We prove that the ample cone $Amp(Y_{\tilde w})$ of $Y_{\tilde w}$ can be identified with a subcone of the ample cone $Amp(Z(\tilde w))$ of $Z(\tilde w)$
(see Corollary \ref{amplecone}).

 Recall that a smooth projective variety $X$ is called {\it Fano} (respectively, {\it weak Fano}) if its 
 anti-canonical divisor $- K_X$ is 
 ample (respectively, {\it nef} and {\it big}).
Following \cite{anderson2014schubert}, we say  that a pair $(X, D)$ of a normal projective variety $X$ and an 
 effective $\mathbb Q$-divisor $D$ is {\it log Fano} if it is Kawamata log terminal and $-(K_X+D)$ is ample.

 Let $\tilde w=s_{\beta_1} \cdots s_{\beta_i}\cdots s_{\beta_j} \cdots s_{\beta_r}$ be an expression (remember that $\beta_k$'s are simple roots).
Let $\beta_{ij}:=\langle \beta_j, \check \beta_i \rangle$, where $\check \beta_i$ is the co-root of $\beta_i$.
 Now we define some conditions on the expression $\tilde w$ 
  (see \cite[Section 1]{Moriconeoftowers} and \cite{correctionMoriconeoftowers}).
    Define for $1\leq i \leq r$,  
 \[\eta^+_i:=\{r\geq j> i: \beta_{ij}>0\}~~\mbox{and}~~ \eta^-_i:=\{r\geq j> i: \beta_{ij}<0\}.\]
 If $|\eta_{i}^+|=1$ (respectively, $|\eta_{i}^+|=2$), then let $\eta_{i}^+=\{m\}$ (respectively, $\eta_{i}^+=\{m_1, m_2\}$).
 If $|\eta_{i}^-|=1$ (respectively, $|\eta_{i}^-|=2$),
 then set $\eta^-_i=\{l\}$ (respectively,  $\eta^-_i=\{l_1, l_2\}$).
 Note that since $\beta_i'$s are simple roots, if $\beta_{ij}>0$ then $\beta_i=\beta_j$ and $\beta_{ij}=2$.
 \noindent 
 \begin{itemize}
 \item $N^{1}_i$  is the condition that  
 
 (i) $|\eta^+_i|=0$, $|\eta^-_i|\leq 1$, and if $|\eta^-_i|=1$ 
 then $\beta_{il}=-1$.
  \item $N^{2}_i$  is the condition that: 
  
   (i) $|\eta^+_i|=0$, $|\eta_i^-|\leq 2$, and 
  if $|\eta^-_i|=1 (\mbox{respectively,}~~|\eta^-_i|=2)$ then  $\beta_{li}=-1$ or $-2$ 
  (respectively, $\beta_{il_1}=-1=\beta_{il_2}$).
       \end{itemize}

   \begin{definition}   
 We say the expression $\tilde w$ satisfies \underline{condition $I$} (respectively,  \underline{condition $II$})
 if
 $N^1_{i}$ (respectively,  $N^2_i$) holds for all $1\leq i\leq r$. 
 Note that $N_i^1\Longrightarrow N_i^2 $ for all $1\leq i\leq r$.
   \end{definition}
 We refer to Section \ref{fanoweakfanologfano} for examples on the conditions $I$ and $II$. The main result of the paper is the following: 
 \begin{theorem*}[See Lemma \ref{toriclimitfano} and Theorem \ref{fanoforbsdh}]\
  \begin{enumerate}
  \item  
  If $\tilde w$ satisfies 
  $I$, then $Y_{\tilde w}$ and  $Z(\tilde w)$ are Fano.
  \item If 
  $\tilde w$ satisfies 
  $II$, then $Y_{\tilde w}$ and $Z(\tilde w)$ are weak Fano.
\end{enumerate}
 \end{theorem*}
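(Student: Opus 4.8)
The plan is to split the statement into its two natural halves: first establish Fanoness, respectively weak Fanoness, of the toric limit $Y_{\tilde w}$ (this is Lemma~\ref{toriclimitfano}), and then transfer each conclusion to the BSDH variety $Z(\tilde w)$ along the degeneration (this is Theorem~\ref{fanoforbsdh}). For the transfer, recall from Section~\ref{degeneration} that the construction of \cite{pasquier2010vanishing, parameswaran2016toric} provides a flat proper family $\pi\colon\mathcal Z\to\mathbb A^1$ whose fibre over $0$ is $Y_{\tilde w}$ and whose other fibres are all isomorphic to $Z(\tilde w)$, and that the relative dualizing bundle $-K_{\mathcal Z/\mathbb A^1}$ restricts to $-K_{Y_{\tilde w}}$ over $0$ and to $-K_{Z(\tilde w)}$ elsewhere. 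Since ampleness of a line bundle is an open condition on the base of a flat proper family, part~(1) for $Y_{\tilde w}$ gives part~(1) for $Z(\tilde w)$ at once. For part~(2) I would use that the natural identification of Picard groups furnished by $\pi$ (which underlies Corollary~\ref{amplecone}) is induced by restriction of line bundles from $\mathcal Z$, hence sends $-K_{Y_{\tilde w}}$ to $-K_{Z(\tilde w)}$; by Corollary~\ref{amplecone} it carries $Amp(Y_{\tilde w})$, and therefore its closure $Nef(Y_{\tilde w})$, into $Nef(Z(\tilde w))$, so nefness of $-K_{Y_{\tilde w}}$ forces nefness of $-K_{Z(\tilde w)}$; bigness then follows because the self-intersection number $(-K)^{r}$ is constant in a flat family, whence $(-K_{Z(\tilde w)})^{r}=(-K_{Y_{\tilde w}})^{r}>0$.

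So everything reduces to the toric statements for $Y_{\tilde w}$. By Corollary~\ref{tl}, $Y_{\tilde w}$ is a smooth complete toric variety, in fact a Bott tower of dimension $r$, so $-K_{Y_{\tilde w}}=\sum_{\rho}D_{\rho}$ is the sum of the torus-invariant prime divisors, and the usual toric criterion applies: $-K_{Y_{\tilde w}}$ is ample (respectively nef) precisely when $(-K_{Y_{\tilde w}})\cdot C>0$ (respectively $\geq 0$) for every torus-invariant curve $C$. I would also use the elementary observation that on a smooth complete toric variety a nef anticanonical divisor is automatically big, since its polytope $\{u:\langle u,v_{\rho}\rangle\geq-1\ \text{for all rays}\ \rho\}$ contains the origin in its interior and is therefore $r$-dimensional; so for $Y_{\tilde w}$ being weak Fano just means $-K_{Y_{\tilde w}}$ is nef. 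Thus the goal becomes: condition $I$ forces every $(-K_{Y_{\tilde w}})\cdot C$ to be positive, and condition $II$ forces every such intersection number to be non-negative — a consistent strengthening, since $N^{1}_{i}\Rightarrow N^{2}_{i}$.

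To carry this out I would invoke the analysis of Bott towers from \cite{Moriconeoftowers}: the fan of $Y_{\tilde w}$ produced by the degeneration is that of a Bott tower whose defining lower-triangular matrix is built from the integers $\beta_{ij}=\langle\beta_{j},\check\beta_{i}\rangle$, its torus-invariant curves are indexed in bijection with the $r$ stages of the tower, and for each such curve $C$ the intersection number $(-K_{Y_{\tilde w}})\cdot C$ evaluates to a short integer combination of $1$'s, of the $\beta_{ij}$ with $j>i$, and of differences of the form $\beta_{ms}-\beta_{is}$. The remaining, and main, task is then the sign bookkeeping: one checks case by case that the clauses defining $N^{1}_{i}$ (respectively $N^{2}_{i}$) are exactly what make the relevant intersection numbers positive (respectively non-negative). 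In particular the three cases of $N^{2}_{i}$, and inside Case~3 the subcases $(i)$--$(iii)$ with their conditions ``$\beta_{mk}=0$ for all $k>m$'' and the alternative ``$\beta_{ms}-\beta_{is}=\pm1$ together with the companion vanishing'', should match precisely the distinct combinatorial types of curve passing through the cones of the fan of $Y_{\tilde w}$.

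I expect the genuine obstacle to be this last step. The subtlety is that the Bott matrix of $Y_{\tilde w}$ is not the naive matrix $(\beta_{ij})$, but a renormalisation produced by iterating the $\mathbb P^{1}$-bundle projections — which is exactly why the intersection numbers, and hence the conditions, involve the differences $\beta_{ms}-\beta_{is}$ rather than the $\beta_{ij}$ alone — so one must track these corrections carefully through all $r$ stages of the tower and verify the inequalities in every case of the definitions. Once the toric inequalities are in hand, the passage to $Z(\tilde w)$ is formal, by the deformation arguments of the first paragraph.
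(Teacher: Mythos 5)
Your overall strategy coincides with the paper's: prove the toric statement for the Bott tower $Y_{\tilde w}$ (both you and the paper ultimately delegate the combinatorial equivalence ``condition $I$/$II$ $\Leftrightarrow$ Fano/weak Fano for $Y_{\tilde w}$'' to the Bott-tower analysis of \cite{Moriconeoftowers}, so the ``sign bookkeeping'' you flag as the main obstacle is not re-derived in the paper either), and then transfer along the degeneration $\pi\colon\mathcal X\to\mathbb A^1$, using openness of ampleness for part (1). The genuine divergence is in part (2). For nefness of $-K_{Z(\tilde w)}$ the paper cites the nefness-in-families result \cite[Theorem 1.4.14]{lazarsfeld2004positivity}, while you deduce it from the linear identification of Picard groups underlying Corollary \ref{amplecone} by passing to closures of ample cones; both are valid. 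For bigness the paper does \emph{not} argue through the family at all: it proves directly and unconditionally that $-K_{Z(\tilde w)}$ is big, using that $Z(\tilde w)\setminus\partial Z(\tilde w)$ is affine (so $\partial Z(\tilde w)$ is big by Lemma \ref{big}) together with $\mathcal O(-K_{Z(\tilde w)})=\mathcal O(\partial Z(\tilde w))\otimes\mathcal L(\delta)$ and nefness of $\mathcal L(\delta)$, the point being that big $\otimes$ nef is big. Your alternative --- constancy of the top self-intersection $(-K)^r$ in the flat family, plus ``nef with positive top self-intersection implies big'' and the observation that a nef anticanonical on a complete toric variety is automatically big because its polytope contains the origin in its interior --- is also correct and arguably more uniform, though it only yields bigness under condition $II$, whereas the paper's argument shows $-K_{Z(\tilde w)}$ is big for every expression $\tilde w$. (One cosmetic slip: the Bott matrix $M_{\tilde w}$ is upper-, not lower-triangular.)
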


  In \cite{Chary1} and \cite{Chary11}, we have obtained some vanishing results for 
 the cohomology of tangent bundle of $Z(\tilde w)$, when $G$ is finite dimensional and $\tilde w$ is reduced 
(see \cite[Section 3]{Chary1} and \cite[Theorem 8.1]{Chary11} ).
%The case $\tilde w$ is non-reduced is considered in \cite{Charynonreduced}.
  Here we get some vanishing results in Kac-Moody setting. Let $T_{Z(\tilde w)}$ denote the tangent bundle of $Z(\tilde w)$.
 \begin{corollary*}[see Corollary \ref{vanishing1}]\
  If $\tilde w$ satisfies 
 $I$, then  $H^i(Z(\tilde w), T_{Z(\tilde w)})=0$ for all $i\geq 1$. In particular,
   $Z(\tilde w)$ is locally rigid.
\end{corollary*}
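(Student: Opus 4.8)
The plan is to establish the vanishing first on the toric limit $Y_{\tilde w}$ — which is a Bott tower by Corollary~\ref{tl} — and then transport it to $Z(\tilde w)$ by semi-continuity along the degeneration recalled in Section~\ref{degeneration}. That section supplies a flat family $\varpi\colon\mathcal Z\to\mathbb A^1$ with $\mathcal Z_0\cong Y_{\tilde w}$ and $\mathcal Z_t\cong Z(\tilde w)$ for $t\neq0$, all fibres being smooth; hence the relative tangent bundle $T_{\mathcal Z/\mathbb A^1}$ is locally free, flat over $\mathbb A^1$, and restricts fibrewise to the tangent bundle of each fibre. Once $H^i(Y_{\tilde w},T_{Y_{\tilde w}})=0$ is known for all $i\ge1$, the semi-continuity theorem forces $t\mapsto\dim_{\mathbb C}H^i(\mathcal Z_t,T_{\mathcal Z_t})$ to vanish on a Zariski-open neighbourhood of $0$, hence at some $t\neq0$, whence $H^i(Z(\tilde w),T_{Z(\tilde w)})=0$ for all $i\ge1$; the case $i=1$ says $Z(\tilde w)$ has no nontrivial first-order deformations, which is exactly local rigidity.

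So the real content is the vanishing on the Bott tower. I would argue by induction along the tower $Y_{\tilde w}=Y_r\to Y_{r-1}\to\cdots\to Y_0=\{pt\}$, writing $\pi_k\colon Y_k=\mathbb P(\mathcal O_{Y_{k-1}}\oplus\mathcal L_k)\to Y_{k-1}$ for the $k$-th projection. One first checks that deleting $s_{\beta_r}$ from $\tilde w$ preserves condition $I$: each $\eta_j^{\pm}$ can only shrink and the clauses of $N_j^1$ are inherited, so every $Y_{k-1}$ is the toric limit of an expression still satisfying $I$ and the inductive hypothesis applies to it. Feeding the relative tangent sequence $0\to T_{Y_k/Y_{k-1}}\to T_{Y_k}\to\pi_k^{*}T_{Y_{k-1}}\to0$ into its long exact cohomology sequence, and using $(\pi_k)_{*}\mathcal O_{Y_k}=\mathcal O_{Y_{k-1}}$, $R^{>0}(\pi_k)_{*}\mathcal O_{Y_k}=0$ together with $(\pi_k)_{*}T_{Y_k/Y_{k-1}}\cong\mathcal L_k\oplus\mathcal O_{Y_{k-1}}\oplus\mathcal L_k^{-1}$ (and no higher direct image), the problem reduces to the positive-degree acyclicity of $\mathcal L_k^{\pm1}$ on $Y_{k-1}$. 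This is exactly where condition $I$ enters: it constrains the entries of the Bott matrix defining the $\mathcal L_k$ (essentially forcing the relevant entries into $\{0,\pm1\}$), and with that constraint $\mathcal L_k^{\pm1}$ are acyclic on the intermediate Bott towers — this is the combinatorial fact isolated in \cite{Moriconeoftowers}. For $i\ge2$ one may instead quote Lemma~\ref{toriclimitfano}, which gives $Y_{\tilde w}$ Fano, and apply Akizuki--Nakano vanishing to $T_{Y_{\tilde w}}\cong\Omega^{\,r-1}_{Y_{\tilde w}}(-K_{Y_{\tilde w}})$; only $i=1$ genuinely needs the combinatorics.

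I expect the main obstacle to be precisely this last point — showing that condition $I$ makes every $\mathcal L_k^{\pm1}$ acyclic in positive degrees — since that is what distinguishes condition $I$ from condition $II$, and is the reason weak Fanoness alone does not suffice (compare the Hirzebruch surfaces $\mathbb F_a$ with $a\ge2$, which are weak Fano Bott towers but not rigid). Everything else — preservation of $I$ under truncation, the relative tangent sequence bookkeeping, and the semi-continuity step — is routine.
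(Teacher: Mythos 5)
Your overall architecture is the same as the paper's: prove the vanishing on the toric limit $Y_{\tilde w}$ and transport it to $Z(\tilde w)$ by semi-continuity along the family $\pi\colon\mathcal X\to\mathbb A^1$ of Theorem \ref{theorem1}. That outer layer of your argument is correct and is exactly what the paper does. The problem is the inner layer. Your treatment of $i\ge 2$ via $T_{Y_{\tilde w}}\cong\Omega^{r-1}_{Y_{\tilde w}}(-K_{Y_{\tilde w}})$ and Akizuki--Nakano is fine, but for $i=1$ you reduce, via the relative tangent sequence and the identification $(\pi_k)_*T_{Y_k/Y_{k-1}}\cong\mathcal L_k^{-1}\oplus\mathcal O\oplus\mathcal L_k$, to the claim that condition $I$ forces $H^{>0}(Y_{k-1},\mathcal L_k^{\pm1})=0$ for every stage of the tower --- and then you do not prove that claim. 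You attribute it to \cite{Moriconeoftowers}, but that reference is about Mori cones and the Fano/weak Fano criteria (it is what feeds Lemma \ref{toriclimitfano}); it does not contain the acyclicity statement you need. Condition $I$ is a condition on the \emph{rows} of $M_{\tilde w}$, while $\mathcal L_k$ is built from the $k$-th \emph{column}, so the assertion that the relevant entries land in $\{0,\pm1\}$ in the form you need, and that this yields acyclicity of both $\mathcal L_k$ and $\mathcal L_k^{-1}$ on the intermediate Bott towers, is a genuine combinatorial claim that your proposal leaves open. As written, the case $i=1$ --- which is precisely the local rigidity statement --- is not established.

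The gap is closable in one line, and this is how the paper does it: by Lemma \ref{toriclimitfano}, condition $I$ makes $Y_{\tilde w}$ a smooth Fano toric variety, and smooth Fano toric varieties satisfy $H^i(X,T_X)=0$ for \emph{all} $i\ge1$ by \cite[Proposition 4.2]{Bien1996}. Equivalently, your own Akizuki--Nakano observation upgrades on a toric variety to Bott--Danilov--Steenbrink vanishing, $H^q(X,\Omega^p_X\otimes L)=0$ for all $q>0$ and $L$ ample, which applied with $p=r-1$ and $L=-K_{Y_{\tilde w}}$ kills $i=1$ as well; no induction along the tower and no column-by-column analysis of $M_{\tilde w}$ is required. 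So the combinatorics of condition $I$ enters only once, through the Fano criterion, not a second time through line-bundle acyclicity. (Your side remark that $\mathbb F_a$ with $a\ge2$ is weak Fano is also off: $-K_{\mathbb F_a}$ is nef only for $a\le2$, so $\mathbb F_2$ is the only non-Fano weak Fano Hirzebruch surface; it does, however, correctly illustrate why condition $II$ alone cannot give rigidity.)
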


 In \cite{anderson2014schubert}, D. Anderson and
 A. Stapledon studied
the log Fanoness of Schubert varieties,
and in \cite{anderson2014effective}, log Fanoness of BSDH varieties is
studied for chosen divisors. 
Let  $D$ be a divisor in $Z(\tilde w)$ with support in the boundary of $Z(\tilde w)$.
For $1\leq i \leq r$, we define some constants $f_i$  which again depend on the given expression $\tilde w$
(for more details see Section \ref{fanoweakfanologfano}).
 \begin{corollary*}[see Corollary \ref{logfano02}]
 The pair $(Z(\tilde w), D)$ is log Fano if   $f_i>0$ for all $1\leq i\leq r$.
    \end{corollary*}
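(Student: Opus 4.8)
The plan is to derive the statement from the analogous fact about the toric limit $Y_{\tilde w}$ together with the comparison of ample cones in Corollary \ref{amplecone}. Write the boundary of $Z(\tilde w)$ as $\partial Z(\tilde w)=\bigcup_{i=1}^r Z_i$ and $D=\sum_{i=1}^r a_i Z_i$ with $0\le a_i<1$. Under the degeneration of Section \ref{degeneration} the divisors $Z_i$ specialize to the torus-invariant divisors $Y_i$ of the Bott tower $Y_{\tilde w}$, producing a divisor $D_0=\sum_{i=1}^r a_i Y_i$, and the canonical class $K_{Z(\tilde w)}$ specializes to $K_{Y_{\tilde w}}$ (the relative dualizing sheaf of the flat family restricts to the dualizing sheaf of each fiber, both fibers being smooth). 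Hence $-(K_{Z(\tilde w)}+D)$ and $-(K_{Y_{\tilde w}}+D_0)$ correspond to each other under the identification $\mathrm{Pic}(Z(\tilde w))\cong \mathrm{Pic}(Y_{\tilde w})$ underlying Corollary \ref{amplecone}.

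First I would dispatch the Kawamata log terminal condition. Since $Z(\tilde w)$ is smooth and $Z_1,\dots,Z_r$ form a simple normal crossings divisor (a standard property of BSDH varieties recalled in Section 2), and since the coefficients $a_i$ lie in $[0,1)$, the pair $(Z(\tilde w),D)$ is automatically klt; no constraint on the $f_i$ enters at this stage. The substance of the corollary is therefore the ampleness of $-(K_{Z(\tilde w)}+D)$. By Corollary \ref{amplecone}, $Amp(Y_{\tilde w})$ is identified with a subcone of $Amp(Z(\tilde w))$, so it is enough to show that the corresponding class $-(K_{Y_{\tilde w}}+D_0)$ lies in $Amp(Y_{\tilde w})$.

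Here I would use that $Y_{\tilde w}$ is a Bott tower (Corollary \ref{tl}): compute $-K_{Y_{\tilde w}}$ in the standard basis of $\mathrm{Pic}(Y_{\tilde w})$ by iterating the relative Euler sequences of the successive $\mathbb P^1$-bundles $\mathbb P(\mathcal O\oplus \mathcal L_k)$, subtract $D_0$, and re-express the result in the basis of nef generators. By the description of the Mori cone (equivalently the nef cone) of a Bott tower in \cite{Moriconeoftowers}, a divisor class on $Y_{\tilde w}$ is ample precisely when all of its coordinates in this basis are strictly positive; by the construction in Section \ref{fanoweakfanologfano} these coordinates are exactly the constants $f_i$. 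Thus $f_i>0$ for all $1\le i\le r$ forces $-(K_{Y_{\tilde w}}+D_0)$ to be ample, hence $-(K_{Z(\tilde w)}+D)$ to be ample, which together with the klt property gives that $(Z(\tilde w),D)$ is log Fano.

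The step I expect to be the main obstacle is the bookkeeping: matching the iterated-$\mathbb P^1$-bundle canonical bundle formula, the contribution of $D_0$, and the change of basis exactly with the definition of the $f_i$, and verifying that the degeneration identifies $Z_i$ with $Y_i$ and $K_{Z(\tilde w)}$ with $K_{Y_{\tilde w}}$ on the nose (not merely up to an unspecified twist). Most of this is already packaged in the earlier results — the ample-cone comparison of Corollary \ref{amplecone} and the Bott-tower structure of Corollary \ref{tl} — so the residual work is essentially the linear-algebraic translation of the inequalities $f_i>0$ into ampleness on the Bott tower.
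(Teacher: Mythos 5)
Your strategy is the paper's: dispose of the klt condition using smoothness and the SNC boundary with coefficients in $[0,1)$, then reduce ampleness of $-(K_{Z(\tilde w)}+D)$ to ampleness of a divisor on the toric limit via openness of ampleness in the family $\pi:\mathcal X\to\mathbb A^1$, and finally apply the Bott-tower ampleness criterion (Lemma \ref{amplenef}) there. The one place you diverge is exactly the step you flag as the ``main obstacle,'' and it is an obstacle you create for yourself: the paper never recomputes $K_{Y_{\tilde w}}$ intrinsically via Euler sequences. Instead it writes $-(K_{Z(\tilde w)}+D)=\sum_i(b_i+1+a_i)Z_i$ using the Mehta--Ramanathan formula, and transports this divisor coefficientwise through the family using Theorem \ref{theorem1}, which says the global divisors $\mathcal Z_i$ restrict to $Z_i$ on general fibers and to $D_{\rho_i^+}$ on the special fiber; the quantities $f_i$ are \emph{by definition} the Lemma \ref{amplenef} coordinates $d_i$ of the resulting divisor $\sum_i(b_i+1+a_i)D_{\rho_i^+}$, so no change of basis or matching is needed. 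Your route requires the additional identity $-K_{Y_{\tilde w}}\sim\sum_i(b_i+1)D_{\rho_i^+}$, i.e.\ that the intrinsic toric anticanonical $\sum_\rho D_\rho$, rewritten in the basis $\{D_{\rho_i^+}\}$, has coordinates equal to the root-theoretic heights $b_i+1$; this is true (it follows from restricting the relative dualizing sheaf, as you indicate, since $\operatorname{Pic}(\mathcal X)$ is generated by the $\mathcal Z_i$), but asserting ``these coordinates are exactly the constants $f_i$'' without that verification is the only real hole in your write-up, and it disappears entirely if you follow the paper's formulation. Minor remark: citing Corollary \ref{amplecone} is harmless but redundant, since it is itself just the openness-of-ampleness statement you need.
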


The article is organized as follows: In section \ref{preliminaries}, we recall the construction of BSDH varieties. 
In Section \ref{degeneration}, we give the algebraic construction of toric degeneration of a BSDH variety. In Section \ref{botttowers}, we describe the limiting toric variety
 as an iterated $\mathbb P^1$-bundle. 
In Section \ref{linebundles on BSDH}, we see some vanishing results of cohomology of  line bundles on BSDH varieties.
Section \ref{fanoweakfanologfano} contains the results on Fano, weak Fano and log Fano properties of BSDH varieties and their toric limits.
We also study the vanishing results on cohomology of tangent bundle on BSDH varieties. 
In Section \ref{AJPPK}, we recover the results in \cite{parameswaran2016toric} by toric methods.

\section{Preliminaries}\label{preliminaries} In this section we 
recall the construction of Bott-Samelson-Demazure-Hansen varieties (see \cite{brion} and \cite{kumar2012kac}) and we recall some definitions in toric geometry which are used in this article (for more details on toric varieties see \cite{cox2011toric} and also \cite{fulton}).
%and \cite{oda}).
We work over the field of complex numbers throughout.

\subsection{BSDH varieties}
Let $A=(a_{ij})_{1\leq i,j\leq n}$ be a generalized Cartan matrix. 
Let $G$ be the Kac-Moody group associated to $A$ (see \cite[Chapter IV]{kumar2012kac}).
Fix a maximal torus $T$ and a Borel subgroup $B$ containing $T$. Let
$S:=\{\alpha_1,\ldots, \alpha_n\}$
be the set of all simple roots of $(G, B, T)$.  We denote $s_{\alpha_i}$ the simple reflection corresponding to $\alpha_i$.
Note that the Weyl group $W$ of $G$ is generated by 
$$\{s_{\alpha_i}: 1\leq i \leq n\}.$$

Let $w\in W$, an expression $\tilde w$ of $w$ is a sequence $(s_{\beta_1}, \ldots, s_{\beta_r})$ of simple reflections 
$s_{\beta_1}, \ldots, s_{\beta_r}$ such that $w=s_{\beta_1}\cdots s_{\beta_r}$. 
An expression $\tilde w$ of $w$ is said to be reduced if the number $r$ of simple reflections is minimal.
In such case we call $r$ the length of $w$. 
By abuse of notation, we also denote the expression $\tilde w$ by $\tilde w=s_{\beta_1}\cdots s_{\beta_r}$.
For $\alpha\in S$,
we denote $P_{\alpha}$, the minimal parabolic subgroup of $G$
generated by $B$ and a representative of $s_{\alpha}$. 
\begin{definition}
 Let $w\in W$ and $\tilde{w} :=s_{\beta_1}\cdots s_{\beta_r}$ be an expression (not necessarily reduced) of $w$.
The Bott-Samelson-Demazure-Hansen (for short, BSDH) variety corresponding to $\tilde w$ is 
$$Z(\tilde w):=P_{\beta_1}\times \cdots \times P_{\beta_r}/B^r,$$
where the action of $B^r$ on $P_{\beta_1}\times \cdots \times P_{\beta_r}$ is defined by 
$$(p_1,\ldots, p_r)\cdot (b_1,\ldots, b_r)=(p_1b_1, b_1^{-1}p_2b_2, \ldots ,b_{r-1}^{-1}p_rb_r) ~~\mbox{for all}~~ p_i\in P_{\beta_i}, b_i\in B.$$
 
 \end{definition}
 
 These are smooth projective varieties of dimension $r$. The BSDH variety can be described also as an
iterated projective line bundle, where each projective bundle 
is the projectivization of certain rank $2$ vector bundle (not necessarily decomposable).
 There is a natural morphism $\phi_{\tilde w}: Z(\tilde w)\longrightarrow G/B$ defined by $$[(p_1, \ldots, p_r)]\mapsto p_1\cdots p_rB.$$

 If $\tilde w$ is reduced, the BSDH variety $Z(\tilde w)$ is a 
 natural desingularization of the Schubert variety, 
 the $B$-orbit closure of $wB/B$ in $G/B$
(see \cite{demazure1974}, \cite{hansen} and \cite[Chapter VIII]{kumar2012kac}).
We can also construct the BSDH variety as an iterated $\mathbb P^1$-bundles.
Let $\tilde w':=s_{\beta_1}\cdots s_{\beta_{r-1}}$. 
Let $f: G/B\longrightarrow G/P_{\beta_r}$ be the map given by $gB\mapsto gP_{\beta_r}$
and let $p:Z(\tilde w') \longrightarrow G/P_{\beta_r}$ be the map given by
$[(p_1,\ldots , p_{r-1})]\mapsto p_1\cdots p_{r-1}P_{\beta_r}$.
Then we have the following cartesian diagram (see \cite[Page 66]{brion} and \cite[Chapter VII]{kumar2012kac}):
\[
 \xymatrix{Z(\tilde w)=Z(\tilde w')\times_{G/P_{\beta_r}}G/B  \ar[rrr]^{\phi_{\tilde w}} \ar[d]^{f_{\tilde w}} &&& G/B \ar[d]^{f}\\
 Z(\tilde w')\ar[rrr]_{p} &&& G/P_{\beta_r}}
\]

Note that $f_{\tilde w}$ is a $\mathbb P^1$-fibration and the relative tangent 
bundle $T_{f_{\tilde w}}$ of $f_{\tilde w}$ is $\phi^{*}_{\tilde w}(\mathcal L_{\beta_r})$, 
where $\mathcal L_{\beta_r}$ is the homogeneous line bundle on $G/B$ corresponding to $\beta_r$. Using the cohomology of the relative tangent bundle $T_{f_{\tilde w}}$
we studied the cohomology of the tangent bundle of $Z(\tilde w)$, when $G$ 
is finite dimensional
and $\tilde w$ is a reduced expression (see \cite{Chary1} and \cite{Chary11}).
The fibration $f_{\tilde w}$ comes with a natural section $\sigma_{\tilde w}:Z(\tilde w')\to Z(\tilde w)$ induced
by the projection 
$$P_{\beta_1} \times \cdots \times P_{\beta_r}\to P_{\beta_1}\times \cdots \times P_{\beta_{r-1}}.$$
For the toric limits we get two natural sections, as will be explained in Section \ref{degeneration}.
For all $i\in \{1, \ldots, r\}$, we denote $Z_i$, the divisor in $Z(\tilde w)$ defined by 
$$\{[(p_1,\ldots, p_r)]\in Z(\tilde w): p_i\in B\}.$$
In \cite {lauritzen2002line}, N. Lauritzen and J.F. Thomsen proved that $Z_i's$
forms a basis of the Picard group of $Z(\tilde w)$ and they also proved that if $\tilde w$ is a reduced expression 
these form a basis of the monoid of effective divisors (see \cite[Proposition 3.5] {lauritzen2002line}).  Recently, 
the effective divisors of $Z(\tilde w)$ for $\tilde w$ non-reduced case have been considered in \cite{anderson2014effective}.

\subsection{Toric varieties}
\begin{definition}
 A normal variety $X$ is called a \it{toric variety} (of dimension $n$)
 if it contains an $n$-dimensional
 torus $T$ (i.e. $T=(\mathbb C^{*})^n$) as a Zariski open subset such that the action of 
 the torus on itself by multiplication extends to an action of the torus on $X$.
\end{definition}
Toric varieties are completely described by the combinatorics of the corresponding fans. 
We denote the fan corresponding to a toric variety by $\Sigma$ and the collection of cones of dimension $s$ in $\Sigma$ by
$\Sigma(s)$ for $1\leq s\leq n$.
For each cone $\sigma \in \Sigma$, we denote $V(\sigma)$, the orbit closure of the orbit corresponding to cone $\sigma$.
For each $\sigma\in \Sigma$, $\sigma(1):=\sigma\cap \Sigma(1)$. For each $\rho\in \Sigma(1)$, we can associate a divisor in $X$, we denote it by $D_{\rho}$ (see \cite[Chapter 4]{cox2011toric} for more details). 
We recall the following:
\begin{definition}\label{def} \

 \begin{enumerate}
  \item 
We say
   $P \subset \Sigma (1)$ is a {\it  primitive collection} if $P$ is not contained in 
 $ \sigma(1)$ for some $ \sigma  \in  \Sigma$
but any proper subset is. Note that if $ \Sigma$ is simplicial, 
primitive collection means that $P$ does not generate a cone in $ \Sigma$ but every proper subset does.
\item  Let $P= \{ \rho_1,  \ldots,  \rho_k \}$ be a primitive collection in a complete simplicial fan $ \Sigma$.
  Denote by $u_{\rho}$, 
  the primitive vector of the ray $\rho\in \Sigma$.
  Then $\sum_{i=1}^{k}u_{ \rho_i}$ is zero or in the relative interior of a cone $\gamma_{P}$ in $\Sigma$ with a unique expression 
 \begin{eqnarray}\label{4.1}
   \sum_{i=1}^{k}u_{ \rho_i}-(\sum_{  \rho  \in \gamma_{P}(1)}c_{ \rho}u_{ \rho})=0 .
   \end{eqnarray}
 where $c_{ \rho} \in  \mathbb Q_{>0}.$ Then we call (\ref{4.1}) the {\it primitive relation} of $X$ corresponding to $P.$ 
\item For a primitive relation $P$, we can associate an element $r(P)$ in $N_1(X)$, where $N_1(X)$ is the real vector space of numerical classes of 
one-cycles in $X$  (see \cite[Page 305]{cox2011toric}).
 
 \end{enumerate}

 \end{definition}

\section{Toric degeneration of a BSDH variety}\label{degeneration}
In \cite{grossberg1994bott},  toric degenerations of  BSDH varieties were constructed  
by  complex geometric methods.
In \cite{pasquier2010vanishing} and \cite{parameswaran2016toric} 
they have given an algebraic 
construction for toric degeneration of a BSDH variety. 
We recall the algebraic construction here.

 Note that the simple roots are linearly independent elements in the character group of $G$. Let $N$ be the lattice of one-parameter subgroups of $T$. We can choose a positive integer
$q$ and an injective morphism $\lambda:\mathbb G_m\longrightarrow T$ (i.e. $\lambda \in N$ and $\lambda$ is injective)
such that for all $1\leq i\leq n$ and $u\in \mathbb G_m$, $\alpha_i(\lambda(u))=u^q$ (see \cite[Page 2836]{pasquier2010vanishing}).
When $G$ is finite dimensional, for each one-parameter subgroup $\lambda \in N$, define
$$P(\lambda):=\{g\in G: lim_{u\to 0} \lambda(u)g\lambda(u)^{-1} ~\mbox{exists in }~ G\}.$$
The set $P(\lambda)$ is a parabolic subgroup and the unipotent radical $R_u(P(\lambda))$ of $P(\lambda)$ is given by 
$$R_u(P(\lambda))=\{g\in G:lim_{u\to 0} \lambda(u)g\lambda(u)^{-1} ~\mbox{is identity in}~ G\}.$$
Any parabolic subgroup of $G$ is of this form (see \cite[Proposition 8.4.5]{springer2010linear}).
 Choose a one-parameter subgroup $\lambda \in N$
such that the corresponding parabolic subgroup is $B$. 
Let us define an endomorphism of  $G$ for all $u\in \mathbb G_m$ by 
$$\tilde {\Psi}_{u}:G\to G, ~\mbox{ }~ g\mapsto \lambda(u)g\lambda(u)^{-1}.$$
Let $\mathcal B$ be the set of all endomorphisms of $B$. 
Now define a morphism 
$$\Psi: \mathbb G_m \to \mathcal B ~~\mbox{ by}~~ u \mapsto \tilde \Psi_u|_{B}.$$
This map can be extended to $0$ and for all $x\in U$, $\Psi_u|_{B}(x)$ goes to identity when 
$u$ goes to zero. Let $\mathbb A^1:=S   pec\mathbb C[t]$ be the affine line over $\mathbb C$. We denote for all $u\in \mathbb A^1$,  $\Psi_{u}$ the image of $u$ in $\mathcal B$.
Note that $\Psi_u$ is the identity on $T$ and  $\Psi_0$ is the projection from $B$ to $T$.
Let $\tilde w = s_{\beta_1}\cdots s_{\beta_r}$ be an expression.

\begin{definition}\

(i) Let $\mathcal X$ be the variety defined by
$$
 \mathcal X:=\mathbb A^1\times P_{\beta_1}\times \cdots \times P_{\beta_r}/B^r,
 $$ 
  where the action of $B^r$ on $\mathbb A^1 \times P_{\beta_1}\times \cdots \times P_{\beta_r}$ is given by
  $$(u, p_1,\ldots, p_r)\cdot (b_1,\ldots, b_r)=(u, p_1b_1, \Psi_u(b_1)^{-1}p_2b_2, \ldots, \Psi_{u}(b_{r-1})^{-1}p_rb_r).$$
 (ii) For all $i\in \{1,\ldots, r\}$, we denote $\mathcal Z_{i}$ the divisor in $\mathcal X$ defined by 
  $$\{(u,p_1,\ldots, p_r)\in \mathcal Z: p_i\in B \}.$$
 \end{definition}

 Note that $\mathcal X$ and $\mathcal Z_{i}'s$ are integral.
 Let $\pi: \mathcal X\to \mathbb A^1$ be the projection onto the first factor. 
   Then we have the following theorem (see \cite[Proposition 1.3 and 1.4]{pasquier2010vanishing} and \cite[Theorem 9]{parameswaran2016toric} ).
 
 \begin{theorem}\label{theorem1}\
 
 \begin{enumerate}
 
  \item $\pi:\mathcal X \to \mathbb A^1$ is a smooth projective morphism.
  \item For all $u\in \mathbb A^1\setminus \{0\}$, the fiber $\pi^{-1}(u)$ is isomorphic to the BSDH variety $Z(\tilde w)$
such that $\pi^{-1}(u)\cap \mathcal Z_i$ corresponds to the divisor $Z_{i}$ in $Z(\tilde w)$.
\item $\pi^{-1}(0)$ is a smooth projective toric variety.
 \end{enumerate}
 
 \end{theorem}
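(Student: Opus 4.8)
The plan is to verify the three assertions of Theorem \ref{theorem1} by working directly with the family $\mathcal X = \mathbb A^1 \times P_{\beta_1} \times \cdots \times P_{\beta_r} / B^r$ and the projection $\pi$. The essential point is that the twisted $B^r$-action interpolates between the ordinary BSDH action at $u \neq 0$ and a ``linearized'' action at $u = 0$, because $\Psi_u$ degenerates from an automorphism of $B$ (for $u \neq 0$) to the projection $B \to T$ (for $u = 0$).

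First I would address (1). The quotient map $\mathbb A^1 \times P_{\beta_1} \times \cdots \times P_{\beta_r} \to \mathcal X$ is a principal $B^r$-bundle locally trivial in the Zariski topology (the same Bott--Samelson tower argument as for $Z(\tilde w)$ works over the base $\mathbb A^1$, since each $P_{\beta_k}/B \cong \mathbb P^1$ and the iterated fibration structure is preserved by the twist). Hence $\mathcal X$ is smooth, and $\pi$ is obtained by composing the structure map with the first projection. Smoothness of $\pi$ follows from smoothness of each iterated $\mathbb P^1$-fibration over $\mathbb A^1 \times (\text{previous stage})$; properness follows because each $\mathbb P^1$-bundle is proper and properness is closed under composition, so $\pi$ is a smooth proper morphism with fibers of dimension $r$. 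One can also cite \cite[Proposition 1.3]{pasquier2010vanishing} for this.

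For (2), fix $u \in \mathbb A^1 \setminus \{0\}$. Then $\Psi_u = \tilde\Psi_u|_B$ is an automorphism of $B$ (conjugation by $\lambda(u) \in T$). I would write down the explicit isomorphism $\pi^{-1}(u) \xrightarrow{\sim} Z(\tilde w)$ induced by
\[
(u, p_1, p_2, \ldots, p_r) \longmapsto \bigl(p_1,\ \lambda(u)^{-1} p_2 \lambda(u),\ \lambda(u)^{-2} p_3 \lambda(u)^2,\ \ldots,\ \lambda(u)^{-(r-1)} p_r \lambda(u)^{r-1}\bigr),
\]
and check that it intertwines the twisted $B^r$-action on the fiber with the standard $B^r$-action defining $Z(\tilde w)$; the conjugations by powers of $\lambda(u)$ exactly absorb the $\Psi_u$-twists in the action formula. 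Each $P_{\beta_k}$ is $\lambda(u)$-conjugation-stable (as $\lambda(u) \in T \subset P_{\beta_k}$), so the map is well-defined and clearly bijective with regular inverse. Under this isomorphism the condition $p_i \in B$ is preserved (again since $\lambda(u)^{-(i-1)} B \lambda(u)^{i-1} = B$), so $\pi^{-1}(u) \cap \mathcal Z_i$ maps to $Z_i$.

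For (3), specialize to $u = 0$, where $\Psi_0 : B \to T$ is the projection killing the unipotent radical $U$. The fiber $\pi^{-1}(0) = P_{\beta_1} \times \cdots \times P_{\beta_r}/B^r$ with the action $(p_1,\ldots,p_r)\cdot(b_1,\ldots,b_r) = (p_1 b_1,\ \Psi_0(b_1)^{-1} p_2 b_2,\ \ldots,\ \Psi_0(b_{r-1})^{-1} p_r b_r)$. Since $\Psi_0(b) \in T$ only depends on the torus part of $b$, the twisting between consecutive factors now goes through $T$ rather than all of $B$; I would exhibit an action of the torus $T^r$ (or rather $(\mathbb C^*)^r$ built from the relevant coweight data) that acts on $\pi^{-1}(0)$ with a dense orbit, using that each $P_{\beta_k}/B \cong \mathbb P^1$ carries a $T$-action and the base-point sections assemble the $r$ copies into an iterated $\mathbb P^1$-bundle with torus action — concretely this is the Bott tower picture developed in Section \ref{botttowers}. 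Smoothness of $\pi^{-1}(0)$ is immediate from (1) since it is a fiber of a smooth morphism; normality follows from smoothness; and the existence of the open dense torus orbit (which I would locate as the image of $(U^-_{\beta_1} \times \cdots \times U^-_{\beta_r})$-type big cells, or simply track through the iterated bundle) finishes the identification as a toric variety.

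I expect the main obstacle to be (3): carefully producing the torus and its dense orbit on $\pi^{-1}(0)$ and checking normality, rather than merely asserting the iterated $\mathbb P^1$-bundle structure. The computations in (1) and (2) are essentially formal once the right conjugation substitution is written down, but verifying that the $u=0$ fiber genuinely satisfies the definition of a toric variety (dense torus orbit extending the multiplication action) requires unwinding the Bott-tower structure; since the detailed fan description is deferred to Section \ref{botttowers}, here I would give the argument at the level of the iterated fibration, or else simply appeal to \cite[Proposition 1.4]{pasquier2010vanishing} and \cite[Theorem 9]{parameswaran2016toric} for the statement as recalled.
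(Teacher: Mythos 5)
Your proposal is correct, but note that the paper gives no proof of this theorem at all --- it is recalled verbatim from \cite{pasquier2010vanishing} (Propositions 1.3 and 1.4) and \cite{parameswaran2016toric} (Theorem 9), and your argument is essentially the one carried out in those references. In particular, your substitution in (2) does work: it intertwines the twisted action of $(b_1,\ldots,b_r)$ with the standard BSDH action of $(\lambda(u)^{-(j-1)}b_j\lambda(u)^{j-1})_{j}$, which is an automorphism of $B^r$ since $\lambda(u)\in T$ normalizes $B$ and each $P_{\beta_j}$; and the dense-$(\mathbb C^*)^r$-orbit verification you flag as the main obstacle in (3) is exactly the Bott-tower fan description that the paper defers to Section \ref{botttowers} (Proposition \ref{fan107}).
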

 
 We denote  $\mathcal X_u:=\pi^{-1}(u)$ for $u\in \mathbb A^1$ and the limiting toric variety $\mathcal X_0=\pi^{-1}(0)$ by $Y_{\tilde w}$.

 \section{Connection to Bott towers}\label{botttowers}
In this section we describe the toric limit $Y_{\tilde w}$ as an iterated $\mathbb P^1$-bundle. We also recall some results on Bott towers from \cite{Moriconeoftowers}.
 Let $\{e_1^+,\ldots, e_{r}^{+}\}$ be the standard basis of the lattice $\mathbb Z^r$.
Define for all $i\in\{1,\ldots, r\}$, 
\begin{equation}\label{e--}
 e_i^{-}:=-e_i^{+}-\sum_{j>i}\beta_{ij}e_{j}^{+},
\end{equation}
where  
$\beta_{ij}:=\langle \beta_j, \check {\beta_i} \rangle$.
 The following proposition will give the description of the fan of the toric variety 
$Y_{\tilde w}$ (see \cite[Proposition 1.4]{pasquier2010vanishing}).
\begin{proposition}\label{fan107}\

\begin{enumerate}
 \item 
The fan $\Sigma$ of the smooth toric variety $Y_{\tilde w}$ consists of the 
cones generated
by subsets of 
$$\{e_{1}^{+}, \ldots, e_{r}^{+}, e_{1}^-,\ldots, e_{r}^-\}$$
and
containing no subset of the form 
$\{e_{i}^+, e_{i}^-\}$. 

 \item For all $i\in \{1,\ldots, r\}, \mathcal Z^0_{i}$ is the 
 irreducible $(\mathbb C^*)^r$-stable divisor in
 $Y_{\tilde w}$ corresponding to the one-dimensional cone of $\Sigma$ generated by $e_i^{+}$ and 
 these form a basis of the divisor class group of $Y_{\tilde w}$.
\end{enumerate}
\end{proposition}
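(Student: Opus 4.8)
The plan is to verify directly that the proposed fan $\Sigma$ — with rays $\{e_1^+,\ldots,e_r^+,e_1^-,\ldots,e_r^-\}$ and maximal cones given by all $r$-element subsets omitting every pair $\{e_i^+,e_i^-\}$ — is indeed the fan of the toric degeneration $Y_{\tilde w}=\pi^{-1}(0)$, and then to read off part (2) from the structure of this fan. I would not reconstruct the fan from scratch; instead I would exhibit $Y_{\tilde w}$ as an iterated $\mathbb P^1$-bundle matching the construction of $\mathcal X_0$, so that the combinatorics falls out inductively. Concretely, set $Y' := Y_{\tilde w'}$ for $\tilde w' = s_{\beta_1}\cdots s_{\beta_{r-1}}$, and observe that the $B^r$-action on $\mathbb A^1\times P_{\beta_1}\times\cdots\times P_{\beta_r}$ restricted to $t=0$ uses $\Psi_0$, the projection $B\to T$, in the last coordinate — so the fiber $\mathcal X_0$ is the total space of a $\mathbb P^1$-bundle over $Y'$ whose transition data is governed by the characters through which $T$ acts, i.e.\ by the integers $\beta_{ij} = \langle\beta_j,\check\beta_i\rangle$.

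\textbf{First I would} set up the induction: for $r=1$, $Y_{\tilde w}=P_{\beta_1}/B \cong \mathbb P^1$, whose fan has rays $e_1^+$ and $e_1^- = -e_1^+$ and two maximal cones, matching the claim. For the inductive step, assume the fan of $Y'$ lives in $\mathbb Z^{r-1}$ as described. Pulling back the $\mathbb P^1$-bundle $\mathcal X_0\to Y'$ corresponds combinatorially to the standard operation on fans: the new rays are $e_r^+$ and a single new ray $e_r^-$ expressed, via the splitting of the bundle, as a combination $-e_r^+ + (\text{lift of a divisor class on }Y')$. The key computational point is to identify that lift: the line bundle twisting the $\mathbb P^1$-bundle is determined by how $\Psi_0(b_{r-1})^{-1}$ (a torus element) acts on $P_{\beta_r}/B\cong\mathbb P^1$, and this action factors through the character $\beta_r$ composed with the coroots $\check\beta_i$ of the earlier factors, which is exactly $\sum_{j>i}\beta_{ij}$ appearing in \eqref{e--}. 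Thus $e_r^- = -e_r^+ - \sum_{j>i}\beta_{ij}e_j^+$ for the appropriate indices, and the maximal cones of the new fan are obtained from those of the old by adjoining either $e_r^+$ or $e_r^-$ — which is precisely the rule "no cone contains $\{e_i^+,e_i^-\}$."

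\textbf{For part (2)}, once the fan is identified, the divisor $\mathcal Z_i^0 = \mathcal Z_i\cap\pi^{-1}(0)$ is the flat limit of the divisor $Z_i\subset Z(\tilde w)$, defined by $p_i\in B$; under the bundle description this is exactly the image of the section of the $i$-th $\mathbb P^1$-fibration coming from $B\subset P_{\beta_i}$, which is the $(\mathbb C^*)^r$-stable prime divisor $D_{e_i^+}$ attached to the ray $\mathbb R_{\geq0}e_i^+$. That these $r$ divisors form a basis of the class group follows from the general toric fact that $\mathrm{Cl}(Y_{\tilde w}) = \mathbb Z^{\Sigma(1)}/M$ together with a rank/count argument: there are $2r$ rays and $r$ relations $\sum_\rho \langle m,u_\rho\rangle D_\rho = 0$ coming from a basis $m$ of $M=\mathbb Z^r$, and the relations \eqref{e--} show each $D_{e_i^-}$ is equivalent to an integral combination of the $D_{e_j^+}$, so the $D_{e_i^+}$ span; freeness follows since $\mathrm{Pic}(Y_{\tilde w})$ has rank $r$ (the variety is an $r$-fold iterated $\mathbb P^1$-bundle over a point).

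\textbf{The main obstacle} I expect is pinning down the precise twisting line bundle of the last $\mathbb P^1$-fibration of $\mathcal X_0$ — i.e.\ proving that the coefficient of $e_j^+$ in $e_i^-$ is exactly $\beta_{ij}=\langle\beta_j,\check\beta_i\rangle$ and not some variant (sign, or a shift). This is where the choice of $\lambda$ with $\alpha_i(\lambda(u))=u^q$ and the explicit form of $\Psi_0$ as the projection $B\to T$ must be used carefully: one has to track how the residual $T$-action on each $P_{\beta_r}/B$-fiber is re-expressed in the coordinates of the torus of $Y'$, and the $SL_2$/coroot calculation in $P_{\beta_i}$ (acting on $\mathbb P^1 = P_{\beta_r}/B$ through $\check\beta_i\circ\beta_r$) is the heart of it. I would isolate this as a lemma computing the transition functions of the bundle $\mathcal X_0\to Y'$, and everything else is bookkeeping with fans.
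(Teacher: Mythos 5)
The paper offers no proof of this proposition at all: it is imported verbatim from Pasquier \cite[Proposition 1.4]{pasquier2010vanishing}, so there is nothing in the text to compare against line by line. Your strategy --- induct on $r$, exhibit $\mathcal X_0$ as a $\mathbb P^1$-bundle over $Y_{\tilde w'}$ using the fact that $\Psi_0$ is the projection $B\to T$, and read off the fan from the toric $\mathbb P^1$-bundle construction --- is exactly the right one, and is in substance the argument in Pasquier's paper. Part (2) of your sketch (the section divisor $\mathcal Z_i^0 = D_{\rho_i^+}$, spanning via the $M$-relations, freeness from $\operatorname{rank}\operatorname{Pic}= \#\Sigma(1)-r = r$) is correct and complete.

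There is, however, a concrete confusion in your inductive step, located precisely at the point you flag as the main obstacle. You write that the new ray is $e_r^- = -e_r^+ + (\text{lift of a divisor class on } Y')$ and then ``$e_r^- = -e_r^+-\sum_{j>i}\beta_{ij}e_j^+$ for the appropriate indices.'' This cannot be right as stated: since $f_r$ is induced by the projection $\mathbb Z^r\to\mathbb Z^{r-1}$ killing the last coordinate, the fiber fan must lie in $\ker\overline f_r$, so $e_r^-=-e_r^+$ exactly (consistent with \eqref{e--}, where the sum over $j>r$ is empty). The twisting by $\mathscr L$ is encoded not in the new ray but in the \emph{lifts of the old negative rays}: the ray $(e_i^-)'=-e_i^+-\sum_{r-1\ge j>i}\beta_{ij}e_j^+$ of $\Sigma'$ lifts to $(e_i^-)'-\beta_{ir}e_r^+$ in $\Sigma$, and it is these $e_r^+$-components $-\beta_{ir}$ that record the line bundle $\mathscr L$ on $Y_{\tilde w'}$, via the $SL_2$-computation of the residual $T$-action on $P_{\beta_r}/B$ that you correctly identify. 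With that correction the induction closes: maximal cones of $\Sigma$ are obtained from the lifted maximal cones of $\Sigma'$ by adjoining one of $e_r^\pm$, which is the ``no $\{e_i^+,e_i^-\}$'' rule. So the plan is sound, but the lemma you propose to isolate must compute the $e_r^+$-coefficients of the lifted rays $e_i^-$ ($i<r$), not a correction term on $e_r^-$.
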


Note that the maximal cones of $\Sigma$ are generated by 
$\{e_i^{\epsilon}: 1\leq i\leq r, \epsilon \in \{+, -\}\}$ .
We denote the divisor corresponding to the one-dimensional cone $\rho_i^{\epsilon}$ generated by $e_i^{\epsilon}$ by $D_{\rho_i^{\epsilon}}$ for $\epsilon \in \{+, -\}$.
%In \cite{pasquier2010vanishing}, B. Pasquier used the combinatorics of the toric variety 
%$Y_{\tilde w}$ to obtain
% vanishing theorems for the cohomology of line bundles on BSDH variety $Z(\tilde w)$. 
Let $\tilde w':=s_{\beta_1}\cdots s_{\beta_{r-1}}$.  Then we get a toric morphism 
$f_r:Y_{\tilde w} \to Y_{\tilde w'}$
induced by the lattice 
map 
$\overline f_r:\mathbb Z^r\to \mathbb Z^{r-1},$
the projection onto the first $r-1$ coordinates.
We prove,
\begin{lemma}\label{f_r}\

\begin{enumerate}
 \item 
 $f_{r}:Y_{\tilde w} \to Y_{\tilde w'}$ is a toric $\mathbb P^1$-fibration
 with two disjoint toric sections.
 \item $Y_{\tilde w}\simeq \mathbb P(\mathcal O_{Y_{\tilde w'}}\oplus \mathscr L)$ for some unique line bundle $\mathscr L$ on $Y_{\tilde w'}$.
  \end{enumerate}
 
\end{lemma}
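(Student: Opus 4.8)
The plan is to read off everything from the explicit combinatorial description of the fan $\Sigma$ of $Y_{\tilde w}$ given in Proposition \ref{fan107}, together with the standard toric dictionary for $\mathbb P^1$-bundles (split projective bundles correspond to fans built by "adding two opposite rays over each ray of the base"; see \cite[\S 7.3]{cox2011toric}). First I would analyze the lattice map $\overline f_r\colon\mathbb Z^r\to\mathbb Z^{r-1}$, the projection forgetting the last coordinate. The rays of $\Sigma$ are $\rho_i^\pm$ generated by $e_i^\pm$, where $e_i^+$ is the $i$-th standard basis vector and $e_i^- = -e_i^+ - \sum_{j>i}\beta_{ij}e_j^+$ by \eqref{e--}. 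For $i<r$, both $e_i^+$ and $e_i^-$ map to vectors whose first $r-1$ coordinates are, respectively, $e_i^+$ and $-e_i^+-\sum_{i<j<r}\beta_{ij}e_j^+$ in $\mathbb Z^{r-1}$ — i.e. precisely the generators $\bar e_i^{\,\pm}$ of the rays of $\Sigma'$, the fan of $Y_{\tilde w'}$. The two remaining rays, $\rho_r^+$ and $\rho_r^-$, generated by $e_r^+$ and $e_r^- = -e_r^+$, span the kernel line of $\overline f_r$. So $\Sigma$ is a refinement over $\Sigma'$: the maximal cones of $\Sigma$ are exactly $\langle C, e_r^\epsilon\rangle$ for $C$ a maximal cone of $\Sigma$ projecting isomorphically onto a maximal cone of $\Sigma'$ and $\epsilon\in\{+,-\}$, which is the defining combinatorial shape of a toric $\mathbb P^1$-bundle with structure group the split torus.

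Next I would make (1) precise. The fact that $f_r$ is a $\mathbb P^1$-fibration follows because every fiber cone sits inside $\ker\overline f_r\otimes\mathbb R = \mathbb R e_r^+$, which with the two rays $\pm e_r^+$ is exactly the fan of $\mathbb P^1$; flatness and properness are automatic for a surjective toric morphism of smooth complete toric varieties whose fan is compatible in this way (alternatively invoke Theorem \ref{theorem1} and semicontinuity, but the fan argument is cleaner and self-contained). For the two disjoint toric sections: the subfan of $\Sigma$ consisting of those cones containing the ray $\rho_r^+$ maps isomorphically (via $\overline f_r$) onto $\Sigma'$ — because adjoining $e_r^+$ to a cone lifting a maximal cone of $\Sigma'$ gives a maximal cone of $\Sigma$, and no cone containing $\rho_r^+$ can contain $\rho_r^-$ by the "no $\{e_i^+,e_i^-\}$" condition of Proposition \ref{fan107}(1) — hence by the toric-morphism criterion it yields a section $s^+\colon Y_{\tilde w'}\to Y_{\tilde w}$ whose image is the divisor $D_{\rho_r^-}$ (the orbit closure $V(\rho_r^-)$). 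Symmetrically $\rho_r^-$ gives a section $s^-$ with image $D_{\rho_r^+} = \mathcal Z^0_r$. These two sections are disjoint because $D_{\rho_r^+}\cap D_{\rho_r^-} = V(\langle\rho_r^+,\rho_r^-\rangle) = \varnothing$, again since $\{e_r^+,e_r^-\}$ spans no cone of $\Sigma$.

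Finally, for (2): a $\mathbb P^1$-bundle is the projectivization of a rank-$2$ vector bundle, and the existence of two disjoint sections forces that bundle to be decomposable. Concretely, $f_r$ being a smooth morphism with one-dimensional fibers and a section $s^+$, I would set $\mathscr L := \big(s^+\big)^{\!*}N_{s^+}$, the conormal — or more symmetrically use the two sections: $Y_{\tilde w}\cong\mathbb P\big(\mathcal O_{Y_{\tilde w'}}(s^+) \oplus \mathcal O_{Y_{\tilde w'}}(s^-)\big)$, which after normalizing by a twist becomes $\mathbb P(\mathcal O_{Y_{\tilde w'}}\oplus\mathscr L)$ with $\mathscr L = \mathcal O_{Y_{\tilde w}}(D_{\rho_r^+})|_{D_{\rho_r^-}}$ pushed to the base (equivalently, $\mathscr L$ is the line bundle whose divisor class is read off from the primitive relation $e_r^+ + e_r^- = -\sum_{i<r}\beta_{ir}e_i^+$, i.e. $\mathscr L = \mathcal O_{Y_{\tilde w'}}\big(\sum_{i<r}\beta_{ir}\,\mathcal Z^0_i\big)$ in the basis of Proposition \ref{fan107}(2)). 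Uniqueness of $\mathscr L$ is the standard fact that $\mathbb P(\mathcal E)\cong\mathbb P(\mathcal E')$ for rank-$2$ bundles forces $\mathcal E'\cong\mathcal E\otimes(\text{line bundle})$, so normalizing one summand to be trivial pins down the other.

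The main obstacle I anticipate is purely bookkeeping: verifying carefully that the projected rays $\overline f_r(e_i^\pm)$ for $i<r$ are \emph{exactly} the ray generators $\bar e_i^{\,\pm}$ of $\Sigma'$ (this is where the recursive structure of \eqref{e--} — that dropping the last coordinate of $e_i^-$ replaces $\sum_{j>i}\beta_{ij}e_j^+$ by $\sum_{i<j<r}\beta_{ij}e_j^+$ — must be matched against the definition of $\Sigma'$), and then checking that a cone $\sigma\in\Sigma$ not containing $e_r^+$ or $e_r^-$ projects isomorphically to its image, so that the two section-subfans are genuinely isomorphic copies of $\Sigma'$ sitting inside $\Sigma$. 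Once that compatibility is in hand, (1) and (2) are immediate from the toric $\mathbb P^1$-bundle dictionary, and identifying $\mathscr L$ concretely is a one-line primitive-relation computation.
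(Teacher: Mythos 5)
Your argument is essentially the paper's: for (1) the paper invokes the split-fan theorem \cite[Theorem 3.3.19]{cox2011toric} where you verify the splitting of $\Sigma$ by $\Sigma'$ and $\{e_r^+,0,e_r^-\}$ by hand, and (2) is obtained in both cases from the two disjoint sections forcing the rank-$2$ bundle to decompose, followed by the normalization $\mathbb P(\mathscr E)\simeq\mathbb P(\mathscr L'\otimes\mathscr E)$. Two cosmetic slips that do not affect the proof: the section built from the star of $\rho_r^{+}$ has image $V(\rho_r^{+})=D_{\rho_r^{+}}$ (not $D_{\rho_r^{-}}$), and since $e_r^{-}=-e_r^{+}$ the primitive relation of $\{\rho_r^{+},\rho_r^{-}\}$ is $e_r^{+}+e_r^{-}=0$; the twisting data $\beta_{ir}$ is instead carried by the $e_r^{+}$-components of the lifted rays $e_i^{-}$ for $i<r$.
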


\begin{proof}
Let $\Sigma'$ be the fan corresponding to the toric variety $Y_{\tilde w'}$. 
From the above proposition, we can see that 
$\Sigma$ has a splitting by $\Sigma'$ and $\{e_r^+, 0, e_r^{-}\}$. Then by \cite[Theorem  3.3.19]{cox2011toric}, 
$$f_r:Y_{\tilde w} \to Y_{\tilde w'}$$ 
is a locally trivial
fibration 
with the fan  $\Sigma_F$ of the fiber being $\{e_r^+, 0, e_r^{-}\}$. 
Since $\Sigma_{F}$ is the fan of the projective line $\mathbb P^1$, we conclude  
$f_r$ is a toric $\mathbb P^1$-fibration.
  As toric sections of the toric fibration correspond to the maximal cones in $\Sigma_F$, 
we get two  disjoint toric sections for $f_r$. This proves (1).

Proof of (2):
Since  $f_r: Y_{\tilde w}\to Y_{\tilde w'}$ is $\mathbb P^1$-fibration with a section,
we see $Y_{\tilde w}$ is a projective bundle $\mathbb P(\mathscr E)$ over $Y_{\tilde w'}$
corresponding to a 
rank 2 vector bundle $\mathscr E$ on $Y_{\tilde w'}$ (see for example \cite[Chapter V, Proposition 2.2, page 370]{hartshorne}).

Recall that the sections of projective bundle $Y_{\tilde w}=\mathbb P(\mathscr E)$
correspond to the quotient line bundles of $\mathscr E$ (see \cite[Proposition 7.12]{hartshorne}).
Since $Y_{\tilde w}=\mathbb P(\mathscr E)$ is projective line bundle on $Y_{\tilde w'}$ with
two disjoint sections, we see  $\mathscr E$ is decomposable as a direct sum of line bundles on $Y_{\tilde w'}$. 

As 
$$\mathbb P(\mathscr E)\simeq \mathbb P(\mathscr L'\otimes \mathscr E)$$
for any line bundle $\mathscr L'$ on $Y_{\tilde w'}$ (see \cite[Lemma 7.9]{hartshorne}), 
we can assume without loss of generality  
$$\mathscr E=\mathcal O_{Y_{\tilde w'}}\oplus \mathscr L$$
for some unique line bundle $\mathscr L$ on $Y_{\tilde w'}$.
Hence $Y_{\tilde w}\simeq \mathbb P(\mathcal O_{Y_{\tilde w'}}\oplus \mathscr L)$ and this completes the proof of the lemma.
\end{proof}

\begin{definition}
 A Bott tower of height $r$  
is a sequence of projective bundles 
$$Y_r  \overset{{\pi_r}}\longrightarrow Y_{r-1} \overset{\pi_{r-1}}\longrightarrow \cdots \overset{\pi_2}\longrightarrow  Y_1=\mathbb P^1 \overset{\pi_1} \longrightarrow Y_0=\{pt\}, $$ 
where $Y_i=\mathbb P (\mathcal O_{Y_{i-1}}\oplus \mathcal L_{i-1})$ for a line bundle $\mathcal L_{i-1}$ over $Y_{i-1}$ for all $1\leq i\leq r$ and $\mathbb P(-)$ denotes the projectivization
(see for more detalis \cite{civan2005bott} and also \cite[Section 2]{Moriconeoftowers}).
\end{definition}

Then by definition of Bott tower and by Lemma \ref{f_r}(2) we get: 
\begin{corollary}\label{tl}
 The toric limit $Y_{\tilde w}$ is a Bott tower.
\end{corollary}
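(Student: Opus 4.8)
The plan is to run an induction on the height $r$, using Lemma \ref{f_r} as the inductive step. The base case $r = 0$ is the statement that $Y_{\tilde w}$ for the empty expression is a point, which is immediate from the definition of the BSDH variety (or from Proposition \ref{fan107} with an empty set of rays, giving the trivial fan). For the base case $r = 1$, the fan $\Sigma$ has rays $e_1^+$ and $e_1^- = -e_1^+$ in $\mathbb Z^1$, which is exactly the fan of $\mathbb P^1$, so $Y_{s_{\beta_1}} \simeq \mathbb P^1 = \mathbb P(\mathcal O_{\{pt\}} \oplus \mathcal O_{\{pt\}})$, matching $Y_1$ in the definition of a Bott tower.

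For the inductive step, suppose $Y_{\tilde w'}$ is a Bott tower of height $r-1$, where $\tilde w' = s_{\beta_1} \cdots s_{\beta_{r-1}}$. By Lemma \ref{f_r}(2), the toric morphism $f_r \colon Y_{\tilde w} \to Y_{\tilde w'}$ exhibits $Y_{\tilde w}$ as $\mathbb P(\mathcal O_{Y_{\tilde w'}} \oplus \mathscr L)$ for a unique line bundle $\mathscr L$ on $Y_{\tilde w'}$. Concatenating the projective-bundle tower for $Y_{\tilde w'}$ with this one more projective bundle $f_r$ on top, and setting $\mathcal L_{r-1} := \mathscr L$, produces a sequence
\[
 Y_{\tilde w} \overset{f_r}\longrightarrow Y_{\tilde w'} \longrightarrow \cdots \longrightarrow \mathbb P^1 \longrightarrow \{pt\}
\]
of exactly the shape required in the definition of a Bott tower of height $r$. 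Hence $Y_{\tilde w}$ is a Bott tower.

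There is essentially no obstacle here: the real content has already been extracted in Lemma \ref{f_r}, whose proof uses the splitting of the fan $\Sigma$ by $\Sigma'$ together with the segment $\{e_r^+, 0, e_r^-\}$ (via \cite[Theorem 3.3.19]{cox2011toric}) and the fact that a $\mathbb P^1$-bundle with two disjoint sections is the projectivization of a decomposable rank-$2$ bundle. The only point worth a sentence is to confirm that the iterated-$\mathbb P^1$-bundle structures are compatible, i.e. that the tower for $Y_{\tilde w'}$ is indeed the one obtained by truncating the $\tilde w$ construction, which is clear since $\overline f_r$ is the coordinate projection $\mathbb Z^r \to \mathbb Z^{r-1}$ and the fan of $Y_{\tilde w'}$ is literally the restriction described in Proposition \ref{fan107} applied to $\tilde w'$. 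So the corollary follows formally from Lemma \ref{f_r}(2) and induction, which is the brief argument the paper indicates by "by definition of Bott tower and by Lemma \ref{f_r}(2)".
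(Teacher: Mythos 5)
Your proposal is correct and follows the same route as the paper, which derives the corollary directly from Lemma \ref{f_r}(2) and the definition of a Bott tower; you have simply made the implicit induction on $r$ explicit. No issues.
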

We have the following situation: 

 \[
\begin{tikzcd}
\mathbb P^1 \arrow[d] && \mathbb P^1 \arrow[d]\\
Z(\tilde w) \arrow[d, "f_{\tilde w}"] \arrow[rr, rightsquigarrow]
&& Y_{\tilde w}  \arrow[d, "f_r"]\\
 Z(\tilde w') \arrow[u,bend left=50, red] \arrow[rr, rightsquigarrow] && Y_{\tilde w'} \arrow[u,bend right=50, red]\arrow[u,bend right=75, red]
\end{tikzcd}
\]

Here the top arrows are inclusions of the fibres.
Recall that the Bott towers bijectively correspond to the upper triangular matrices with integer entries (see \cite[Section 3]{civan2005bott}).
Here the upper triangular matrix $M_{\tilde w}$ corresponding to $Y_{\tilde w}$ is given by 
$$M_{\tilde w}=\begin{bmatrix}
        1 & \beta_{12} & \beta_{13} & \dots & \beta_{1r}\\
        0 & 1 & \beta_{23} & \dots & \beta_{2r}\\
        0 & 0 & 1 &\dots & \beta_{3r}\\
        \vdots & \vdots & &\ddots & \vdots \\
        0 & \dots & \dots & & 1
       \end{bmatrix}_{r\times r},
$$ where $\beta_{ij}$'s are integers as defined before.
Let $P_i:=\{\rho_i^+, \rho_i^-\}$ for $1\leq i \leq r$. Then by \cite[Lemma 4.3]{Moriconeoftowers}, 
$\{P_i: 1\leq i\leq r\}$ is the set of all primitive collections of $Y_{\tilde w}$. 
For each $1\leq i \leq r$, we denote the cone in the definition of primitive relation (see Section \ref{preliminaries}) corresponding to $P_i$ by $\gamma_{P_i}$. 
Let $D=\sum_{\rho\in \Sigma(1)} a_{\rho}D_{\rho}$ be a toric divisor in $Y_{\tilde w}$ with $a_{\rho}\in \mathbb Z$ and 
 for $1\leq i\leq r $, define
$$d_i:=(a_{\rho_i^+}+a_{\rho_i^-}-\sum_{\gamma_j \in \gamma_{P_i}(1)}c_{j}a_{\gamma_j}).$$
Then we recall the following from \cite[Lemma 5.1]{Moriconeoftowers}:
\begin{lemma}\label{amplenef}\
 \begin{enumerate}
  \item  $D$ is ample if and only if $d_i>0$ for all $1\leq i \leq r$.
  \item $D$ is numerically effective ({\it nef}) if and only if 
  $d_i\geq 0$ for all $1\leq i \leq r$.
\end{enumerate}
\end{lemma}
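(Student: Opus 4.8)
The plan is to derive both statements from toric Mori theory on the smooth projective toric variety $Y_{\tilde w}$, the essential input being a description of the Mori cone $\overline{NE}(Y_{\tilde w})$ supplied by its Bott-tower structure. \emph{Step 1: each $d_i$ is an intersection number.} On a smooth complete toric variety, a primitive collection $P$ with primitive relation~\eqref{4.1} gives rise to the curve class $r(P)\in N_1$, and for a torus-invariant divisor $D=\sum_\rho a_\rho D_\rho$ the standard formula is
\[ D\cdot r(P)\;=\;\sum_{\rho\in P}a_\rho\;-\;\sum_{\rho\in\gamma_P(1)}c_\rho\,a_\rho. \]
By \cite[Lemma~4.3]{Moriconeoftowers} the sets $P_1,\dots,P_r$ are exactly the primitive collections of $Y_{\tilde w}$, so comparing with the definition of $d_i$ we obtain $d_i=D\cdot r(P_i)$ for $1\le i\le r$.

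\emph{Step 2: the Mori cone.} I would prove that $\overline{NE}(Y_{\tilde w})=\sum_{i=1}^{r}\mathbb R_{\ge 0}\,r(P_i)$ and that the $r(P_i)$ are linearly independent, so this is a full-dimensional simplicial cone. Independence is routine: pairing $r(P_j)$ against the basis $\{[\mathcal Z^0_i]\}=\{[D_{\rho_i^{+}}]\}$ of the divisor class group (Proposition~\ref{fan107}(2)) and using Step~1, one notes from~\eqref{e--} that the primitive relation of $P_j$ involves only $e_j^{\pm}$ and rays $e_k^{\pm}$ with $k>j$; hence $\rho_i^{+}\in P_j$ only if $i=j$, and $\rho_i^{+}\in\gamma_{P_j}(1)$ only if $i>j$, so the matrix $\big(D_{\rho_i^{+}}\cdot r(P_j)\big)_{i,j}$ is triangular with $1$'s on the diagonal, hence invertible. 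The generation statement I would establish by induction on $r$ via the toric $\mathbb P^1$-bundle $f_r\colon Y_{\tilde w}=\mathbb P(\mathcal O_{Y_{\tilde w'}}\oplus\mathscr L)\to Y_{\tilde w'}$ of Lemma~\ref{f_r}: the Mori cone of a $\mathbb P^1$-bundle is generated by the class $[F]$ of a fibre together with the classes of curves contained in a section, and the latter form a lift of $\overline{NE}(Y_{\tilde w'})$; by the inductive hypothesis that cone is generated by $r(P_1),\dots,r(P_{r-1})$, which lift to the corresponding classes in $Y_{\tilde w}$, while $[F]=r(P_r)$ (the primitive relation of $P_r$ being simply $e_r^{+}+e_r^{-}=0$). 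Alternatively this step may simply be quoted from \cite[Section~4]{Moriconeoftowers}; it is the substantive point of the lemma and essentially the only place the iterated-$\mathbb P^1$-bundle structure is used.

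\emph{Step 3: conclusion.} Since $Y_{\tilde w}$ is projective, Kleiman's criterion together with the toric cone theorem gives that $D$ is ample iff $D\cdot z>0$ for every nonzero $z\in\overline{NE}(Y_{\tilde w})$, and $D$ is nef iff $D\cdot z\ge 0$ for all such $z$. By Step~2 it suffices to test on the generators $r(P_1),\dots,r(P_r)$, and by Step~1 this says precisely $d_i>0$ (respectively $d_i\ge 0$) for all $1\le i\le r$. This proves (1) and (2); note that (1) merely records that the ample cone is the relative interior of the nef cone, both being cut out inside $\mathrm{Pic}(Y_{\tilde w})\otimes\mathbb R\cong\mathbb R^r$ by the $r$ linearly independent functionals $D\mapsto d_i$. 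The main obstacle is the generation half of Step~2; once it is granted, Steps~1 and~3 are purely formal.
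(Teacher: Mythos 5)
Your proposal is correct, but note that the paper does not prove this lemma at all: it is recalled verbatim from \cite[Lemma 5.1]{Moriconeoftowers}, and the ingredients you reconstruct in Steps 1--2 (that $P_1,\dots,P_r$ are the only primitive collections, that $d_i=D\cdot r(P_i)$, and that $\overline{NE}(Y_{\tilde w})=\sum_{i=1}^r\mathbb R_{\geq 0}\,r(P_i)$ with the $r(P_i)$ a basis of $N_1$) are exactly the statements the paper later quotes from that same reference in the proof of Theorem \ref{curves}; so your route via Batyrev's primitive-relation description of the Mori cone plus the toric Kleiman criterion is essentially the intended one. The only soft spot is your first suggested proof of generation in Step 2: it is not true for a general $\mathbb P^1$-bundle that $\overline{NE}$ is generated by the fibre class together with a lift of $\overline{NE}$ of the base through a single section. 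Here every torus-invariant wall curve lies either in a fibre of $f_r$ or in one of the \emph{two} disjoint sections $D_{\rho_r^{+}}$, $D_{\rho_r^{-}}$, and these two sections induce different splittings of $N_1(Y_{\tilde w})\to N_1(Y_{\tilde w'})$, so one must still show that the classes coming from one section are nonnegative combinations of the fibre class and the classes coming from the other. This is exactly what the primitive-relation bookkeeping (or the citation of \cite[Section 4]{Moriconeoftowers}, your stated fallback) supplies; with that in hand, Steps 1 and 3 are, as you say, formal, and the triangularity argument for linear independence is correct.
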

Also note that the conditions $I$ and $II$ on $\tilde w$ are same as the conditions on $M_{\tilde w}$ as in \cite{Moriconeoftowers}.

\section{Vanishing results on Cohomology of certain line bundles on BSDH varieties}\label{linebundles on BSDH}
Let $X$ be a smooth projective variety.
Recall $N^1(X)$ denote the real finite dimensional vector space of numerical classes of 
real divisors in $X$ (see \cite[ \S 1, Chapter IV]{kleiman1966toward}).
The ample cone $Amp(X)$ of $X$ is the cone in $N^1(X)$ 
generated by classes of ample divisors. 

\subsection{Ample cone of the toric limit of BSDH variety}
In \cite{lauritzen2002line}, the ampleness of line bundles on BSDH variety $Z(\tilde w)$ is studied.
Now we compare the ample cone of the toric limit $Y_{\tilde w}$ with that of the BSDH-variety $Z(\tilde w)$ as a consequence of Theorem \ref{theorem1}.
\begin{corollary}\label{amplecone} The
 ample cone $Amp(Y_{\tilde w})$ of $Y_{\tilde w}$ can be identified with a subcone of the ample cone $Amp(Z(\tilde w))$  of $Z(\tilde w)$.
\end{corollary}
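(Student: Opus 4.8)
The plan is to use the family $\pi\colon\mathcal X\to\mathbb A^1$ from Theorem \ref{theorem1} as a bridge between the N\'eron–Severi groups of the general fiber $Z(\tilde w)$ and the special fiber $Y_{\tilde w}$. Since $\pi$ is a smooth projective morphism over the smooth affine base $\mathbb A^1$, all fibers are deformation equivalent, and in particular their second cohomology groups (equivalently, their Picard groups, since both $Z(\tilde w)$ and $Y_{\tilde w}$ are smooth rational projective varieties with $H^1(\mathcal O)=H^2(\mathcal O)=0$) are identified. Concretely, the divisors $\mathcal Z_i$ in $\mathcal X$ restrict to the basis $Z_i$ of $\operatorname{Pic}(Z(\tilde w))$ on a general fiber (by \cite{lauritzen2002line}) and to the basis $\mathcal Z_i^0$ of $\operatorname{Pic}(Y_{\tilde w})$ on the special fiber (by Proposition \ref{fan107}(2)). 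So the first step is to make precise the canonical isomorphism $\psi\colon N^1(Z(\tilde w))\xrightarrow{\ \sim\ } N^1(Y_{\tilde w})$ sending $[Z_i]\mapsto[\mathcal Z_i^0]$, induced by restriction from the total space $\mathcal X$.

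The second step is the semicontinuity (openness) of ampleness in families: if $\mathcal L$ is a line bundle on $\mathcal X$ whose restriction $\mathcal L_0$ to the special fiber $Y_{\tilde w}$ is ample, then $\mathcal L_u$ is ample on $\mathcal X_u=Z(\tilde w)$ for all $u$ in a Zariski-open neighborhood of $0$ in $\mathbb A^1$; this is standard (e.g. \cite[\S III.9 / Exercise]{hartshorne}, or EGA IV, 9.6.4). Shrinking $\mathbb A^1$ if necessary — or rather, noting that the relevant open set meets $\mathbb A^1\setminus\{0\}$, where all fibers are isomorphic to $Z(\tilde w)$ — we conclude that a class in $N^1(Y_{\tilde w})$ that is ample and extends over $\mathcal X$ pulls back (via $\psi^{-1}$) to an ample class on $Z(\tilde w)$. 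Since every ample class on $Y_{\tilde w}$ is represented by an integral combination of the $\mathcal Z_i^0$, hence is the restriction of the corresponding integral combination of the $\mathcal Z_i$ on $\mathcal X$, every ample class on $Y_{\tilde w}$ lands inside $\operatorname{Amp}(Z(\tilde w))$ under $\psi^{-1}$. Therefore $\psi^{-1}(\operatorname{Amp}(Y_{\tilde w}))$ is a subcone of $\operatorname{Amp}(Z(\tilde w))$, which is the assertion.

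The main obstacle is purely bookkeeping rather than conceptual: one must check that the identification of divisor classes really is induced by a single line bundle (or $\mathbb Q$-divisor class) on the total space $\mathcal X$, so that "restrict to $0$" and "restrict to $u$" are two specializations of the \emph{same} class and the semicontinuity statement applies to it. This is exactly what the divisors $\mathcal Z_i\subset\mathcal X$ provide: each $\mathcal Z_i$ is an integral divisor on $\mathcal X$ restricting to $Z_i$ on $\mathcal X_u$ and to $\mathcal Z_i^0$ on $\mathcal X_0$, so for an integral divisor class $\sum a_i[\mathcal Z_i^0]\in N^1(Y_{\tilde w})$ we take $\mathcal L=\mathcal O_{\mathcal X}(\sum a_i\mathcal Z_i)$ and apply openness of ampleness to this $\mathcal L$. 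One should also remark that $\psi$ need not be surjective onto the ample cone — hence "subcone" rather than "equal" — because there is no reason a priori for an ample class on $Z(\tilde w)$ to remain ample after degenerating to $Y_{\tilde w}$; indeed, since the ample cone of the Bott tower $Y_{\tilde w}$ is cut out by the finitely many inequalities $d_i>0$ of Lemma \ref{amplenef}, while $\operatorname{Amp}(Z(\tilde w))$ may be strictly larger, the inclusion is in general proper.
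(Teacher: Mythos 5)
Your proof is correct and follows essentially the same route as the paper: both arguments rest on Theorem \ref{theorem1} together with the openness of ampleness for the smooth projective family $\pi\colon\mathcal X\to\mathbb A^1$ (the paper cites \cite[Theorem 1.2.17]{lazarsfeld2004positivity} for this). The only difference is that you make explicit the identification $N^1(Y_{\tilde w})\simeq N^1(Z(\tilde w))$ via the global divisors $\mathcal Z_i$ restricting to the bases $\mathcal Z_i^0$ and $Z_i$, a point the paper's proof leaves implicit when it speaks of ``a line bundle on $\pi$'' extending a given ample class on the special fiber.
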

\begin{proof}
 By Theorem \ref{theorem1}, $\pi: \mathcal X \to \mathbb A^1$ is a smooth projective morphism with $\mathcal X_0=Y_{\tilde w}$ and 
 $\mathcal X_{u}=Z(\tilde w)$ for $u\neq 0$. 
  Let $\mathcal L=\{\mathcal L_u: u\in \mathbb A^1\}$ be a line bundle on $\pi: \mathcal X \to \mathbb A^1$ with $\mathcal L_0$ is an ample line bundle 
 on $Y_{\tilde w}$.
Note that the ampleness of line bundle is an open
 condition for the proper morphism $\pi$, i.e. 
 there exists an open subset $U$ in $\mathbb A^1$
 containing $0$ such that $\mathcal L_u$ is an ample line bundle on $\mathcal X_u$ for all $u\in U$  (see  \cite[Theorem 1.2.17]{lazarsfeld2004positivity}). 
 Hence we can identity 
 $Amp(Y_{\tilde w})$  with  a subcone of $Amp(Z(\tilde w))$.
\end{proof}
\subsection{Vanishing results}

In \cite{pasquier2010vanishing}, B. Pasquier obtained
vanishing theorems for the cohomology of any line bundle on BSDH varieties in some degrees 
%by using combinatorics of the toric limit 
(see \cite[Theorem 0.1]{pasquier2010vanishing}).
%and by semi-continuity theorem 
Here we obtain a class of line bundles on BSDH varieties for which all higher degree cohomologies vanish.  To state our result we need the following notation.
 
  Let $1\leq i \leq r$, define   $h_i^{i-1}:= -\beta_{(i-1)i}$ and  
$$h_i^j:=\begin{cases}
         0 & ~\mbox{for}~ j>i.\\
         1& ~\mbox{for}~ j=i.\\
         -\sum_{k=j}^{i-1}\beta_{k i}(h_k^j) & ~\mbox{for}~ j<i.
        \end{cases}
$$
Let $\epsilon \in \{+, -\}$. Define $\Sigma(1)^{\epsilon}:=\{\rho_i^{\epsilon}: 1\leq i \leq r\}.$
Then we can write a toric divisor $D$ in $Y_{\tilde w}$ as follows:
$$D=\sum_{\rho\in \Sigma(1)}a_{\rho}D_{\rho}=
\sum_{\rho\in \Sigma(1)^+}a_{\rho}D_{\rho}+
\sum_{\rho\in \Sigma(1)^-}a_{\rho}D_{\rho}.$$

For $1\leq i \leq r$, let 
$$g_i:=a_{\rho_i^+}+\sum_{j=i}^{r}a_{\rho_j^-}h_j^i.$$ Recall $d_i$ from Section \ref{botttowers},
$$d_i:=(a_{\rho_i^+}+a_{\rho_i^-}-\sum_{\gamma_j \in \gamma_{P_i}(1)}c_{j}a_{\gamma_j}),$$

Let $D'=\sum_{i=1}^rg_iZ_i$ be a divisor in $Z(\tilde w)$, where $Z_i$ is as in Section \ref{preliminaries} for $1\leq i \leq r$.
Now we prove the following:
\begin{lemma}
 If $d_i\geq 0$ for all $1\leq i \leq r$, then $H^j(Z(\tilde w), D')=0$ for all $j>0$.
\end{lemma}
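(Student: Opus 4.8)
The plan is to transfer the statement from the toric limit $Y_{\tilde w}$ to the BSDH variety $Z(\tilde w)$ via the semicontinuity theorem applied to the flat family $\pi\colon\mathcal X\to\mathbb A^1$ of Theorem~\ref{theorem1}. First I would observe that, by construction, the divisors $\mathcal Z_i$ on $\mathcal X$ restrict to $Z_i$ on the generic fiber $\mathcal X_u\cong Z(\tilde w)$ and to the torus-stable divisors $\mathcal Z_i^0=D_{\rho_i^+}$ on the special fiber $Y_{\tilde w}$. Hence the line bundle $\mathcal L:=\mathcal O_{\mathcal X}(\sum_i g_i\,\mathcal Z_i)$ is a relatively ample-independent family whose restriction to $\mathcal X_u$ is $\mathcal O_{Z(\tilde w)}(D')$ for $u\neq 0$, and whose restriction to $Y_{\tilde w}$ is $\mathcal O_{Y_{\tilde w}}(\sum_i g_i\,D_{\rho_i^+})$. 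The key is therefore to identify this last divisor on $Y_{\tilde w}$ with the divisor $D=\sum_{\rho}a_\rho D_\rho$ whose coefficients $a_\rho$ appear in the definition of $d_i$, and then to invoke a vanishing statement on the Bott tower $Y_{\tilde w}$.

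The combinatorial heart of the argument is the claim that $\sum_{i=1}^r g_i\,D_{\rho_i^+}$ is linearly equivalent on $Y_{\tilde w}$ to $D=\sum_\rho a_\rho D_\rho$. Since by Proposition~\ref{fan107}(2) the classes $[D_{\rho_i^+}]=[\mathcal Z_i^0]$ form a basis of the divisor class group, every divisor class is uniquely $\sum_i (\text{coeff})\,D_{\rho_i^+}$; I would use the linear equivalences coming from the characters (i.e.\ from the relations $\sum_\rho\langle m,u_\rho\rangle D_\rho=0$ for $m$ in the dual lattice) together with the formula $e_i^- = -e_i^+ - \sum_{j>i}\beta_{ij}e_j^+$ from \eqref{e--} to rewrite each $D_{\rho_i^-}$ in terms of the $D_{\rho_j^+}$. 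Carrying this out, the coefficient of $D_{\rho_i^+}$ in $\sum_\rho a_\rho D_\rho$ becomes exactly $a_{\rho_i^+}+\sum_{j=i}^r a_{\rho_j^-}h_j^i=g_i$; this is precisely the role of the auxiliary numbers $h_j^i$, which are defined by the recursion that inverts the unipotent matrix with entries $\beta_{ik}$. So the definitions of $g_i$ and $h_j^i$ are engineered so that $D'$ pulls back to $D$ across the degeneration.

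With that identification in hand, the hypothesis $d_i\geq 0$ for all $i$ says, by Lemma~\ref{amplenef}(2), that $D$ is nef on $Y_{\tilde w}$. On a smooth complete toric variety a nef divisor has no higher cohomology (Demazure vanishing, e.g.\ \cite[Theorem 9.2.3 or 9.3.5]{cox2011toric}), so $H^j(Y_{\tilde w},\mathcal O(D))=0$ for all $j>0$. Now I would apply the semicontinuity / base-change theorem (\cite[III.12.8 and III.12.11]{hartshorne}) to $\pi$ and $\mathcal L$: since $\pi$ is proper and flat (indeed smooth projective) and the special fiber has vanishing $H^j$ for $j>0$, the function $u\mapsto h^j(\mathcal X_u,\mathcal L_u)$ is upper semicontinuous and hence vanishes on a neighbourhood of $0$; in particular $H^j(Z(\tilde w),\mathcal O(D'))=H^j(\mathcal X_u,\mathcal L_u)=0$ for $u$ in that neighbourhood, and since all fibers $\mathcal X_u$ with $u\neq 0$ are isomorphic to $Z(\tilde w)$ and carry the same line bundle $\mathcal O(D')$, the vanishing holds on $Z(\tilde w)$ itself. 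The main obstacle is the bookkeeping in the second paragraph: verifying that the change of basis from $\{D_{\rho_i^+},D_{\rho_i^-}\}$ to $\{D_{\rho_i^+}\}$ produces exactly the coefficients $g_i$ with the $h_j^i$ as defined — everything else is a direct citation of standard toric vanishing and semicontinuity.
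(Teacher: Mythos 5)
Your proposal is correct and follows essentially the same route as the paper: show $D$ is nef on $Y_{\tilde w}$ via Lemma \ref{amplenef}, apply Demazure vanishing on the toric limit, identify $D\sim\sum_i g_i D_{\rho_i^+}$ so that the family $\sum_i g_i\mathcal Z_i$ interpolates between $D$ and $D'$, and conclude by semicontinuity. The only cosmetic difference is that you sketch the change-of-basis computation for the $h_j^i$ directly from the character relations, whereas the paper simply cites \cite[Corollary 3.3]{Moriconeoftowers} for that identity.
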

\begin{proof}
 If $d_i\geq 0$ for all $1\leq i \leq r$, by Lemma \ref{amplenef}, $\sum_{\rho\in \Sigma(1)}a_{\rho}D_{\rho}$ is a {\it nef} divisor in $Y_{\tilde w}$.
Then we have 
\begin{equation}\label{bb}
 H^j(Y_{\tilde w}, \sum_{\rho\in \Sigma(1)}a_{\rho}D_{\rho})=0 ~\mbox{for all}~ j>0
 \end{equation}
(see \cite[Theorem 9.2.3, page 410]{cox2011toric} or \cite[Theorem 2.7, page 77]{oda}).
Recall that by Theorem \ref{theorem1}, we have $$\mathcal {Z}_i^{x}=Z_{i} ~\mbox{ for}~ 0\neq x\in k ~\mbox{ and}~
\mathcal Z_i^{0}=D_{\rho_i^+}.$$
By \cite[Corollary 3.3]{Moriconeoftowers}, we can write 
$$D=\sum_{\rho\in \Sigma(1)}a_{\rho}D_{\rho} \sim  \sum_{i=1}^{r}g_iD_{\rho_i^+}.$$
Hence by (\ref{bb}), Theorem \ref{theorem1} and by semi-continuity theorem (see \cite[Theorem 12.8]{hartshorne}), we get 
$$H^j(Z(\tilde w), D')=0 ~\mbox{ for all}~ j>0.$$
 \end{proof}
 \begin{remark}  Due to complexity of the conditions in \cite{pasquier2010vanishing},  it is not clear that the above result can be deduced from \cite[Theorem 0.1]{pasquier2010vanishing}.
 \end{remark}

\section{Fano, Weak Fano and log Fano BSDH varities}\label{fanoweakfanologfano}

 \subsection{Fano and weak Fano properties}\label{6.2BSDH} In this section, 
  we observe that Fano and weak Fano properties for BSDH variety $Z(\tilde w)$ depend on the given expression $\tilde w$.
 \begin{comment}
   When $G$ is finite dimensional and the expression $\tilde w$ 
 is reduced, the classification of the expressions $\tilde w$ for which $Z(\tilde w)$ is Fano or 
 weak Fano is considered in \cite{Charyfano}. 
 Here we get similar results when $G$ is a Kac-Moody group.
 \end{comment}
 We use the terminology from Section \ref{intro}. First we discuss the conditions $I$ and $II$ with some examples. 
  We use the ordering of simple roots as in \cite[Page 58]{Hum1}. 
  Note that if $\beta_{ij}>0$ then $\beta_{ij}=2$ and so the conditions in \cite{correctionMoriconeoftowers} are reduces to our conditions $I$ and $II$.
  
 \underline{\bf The condition $I$:}
 \begin{example} Assume that
 $|\eta^+_i|=0$ and $|\eta^-_i|=0$.
  This condition means that the expression $\tilde w$ is fully commutative without repeating the simple reflections. 
  For example if $G=SL(n, \mathbb C)$ and $\tilde w=s_{\alpha_1}s_{\alpha_3} \cdots s_{\alpha_r}$, $1< r \leq n-1$ and $r$ is odd, then 
 $|\eta_i^+|=0$ and $|\eta_i^-|=0$ for all $i$.
 Hence
$\tilde w$
satisfies the condition $I$ and also observe that in this case we have 
$$Y_{\tilde w}\simeq Z(\tilde w)\simeq \mathbb P^1\times \cdots \times \mathbb P^1  \hspace {0.7cm} (dim(Z(\tilde w))~\mbox{times}~).$$

 \end{example}

 \begin{example}
 %\underline{The condition $II$:}
 Let $G=SL(n, \mathbb C)$ and  fix $1\leq j < r\leq n-1$ such that $j$ is even and $r$ is odd. 
 Let $\tilde w=s_{\alpha_1}s_{\alpha_3}\cdots  s_{\alpha_{j-3}}s_{\alpha_{j-1}} s_{\alpha_j} s_{\alpha_{j+1}}s_{\alpha_{j+3}}  \cdots s_{\alpha_r}$.
 Note that $s_{\alpha_j}$ appears only once in the expression $\tilde w$ and $|\eta_i^+|=0$ for all $i$. 
 Let $p=j/2+1$ be the \textquoteleft position of $s_{\alpha_j}$' in the expression $\tilde w$, then $|\eta_i^-|=0$ for all $i\neq p , p-1$ and $|\eta_{p-1}^-|=1=|\eta_p^-|$ with 
 $\beta_{p-1 p}=-1=\beta_{p p+1}$.
Hence $\tilde w$ satisfies condition $I$. 
 
  \end{example}

 \underline{\bf The condition $II$:}
 
First observe that $\tilde w$ satisfies the condition $I$ then it also satisfies the condition $II$. 
 \begin{example}
 Let $G=SL(n, \mathbb C)$.
 \begin{enumerate}
  \item 
 Fix $1\leq j < r\leq n-1$ such that
 $j$ is even and $r$ is odd. 
 Let $\tilde w=s_{\alpha_1}s_{\alpha_3}\cdots  s_{\alpha_{j-3}}s_{\alpha_j}s_{\alpha_{j-1}} s_{\alpha_{j+1}}s_{\alpha_{j+3}}  \cdots s_{\alpha_r}$
 (observe that, here,  we interchanged $s_{\alpha_j}$ and 
 $s_{\alpha_{j-1}}$ in the example of condition $I$).
Then $|\eta_i^+|=0$ 
 and $|\eta_i^-|\leq 2$ for all $i$. Let $p=j/2-1$ be the \textquoteleft position of $s_{\alpha_j}$' in the expression $\tilde w$,
 then $|\eta_i^-|=0$ for all $i\neq p$ and $|\eta_p^-|=2$ with 
 $\beta_{p p+1}=-1=\beta_{p p+2}$. 
Hence 
$\tilde w$ satisfies the condition $II$ but not $I$.
 
\item Let $\tilde w=s_{\alpha_1}s_{\alpha_3}s_{\alpha_1}$. Then $|\eta_1^+|=1$ with 
 $\beta_{13}=2$, and $|\eta_1^-|=
 |\eta_2^+|=\eta_2^-|=0$ .
 Hence $\tilde w$
  does not satisfy $II$.

 \end{enumerate}

  \end{example}

 \begin{example}
 Observe that the condition  $|\eta_i^-|=1$ and $\beta_{il}=-2$, happens only in non-simply laced cases.
  Let $G=SO(5, k)$ (i.e. $G$ is of type $B_2$) and recall that we have  $\langle \alpha_{1}, \alpha_2 \rangle =-2$ and $\langle \alpha_{2}, \alpha_1 \rangle =-1$. 
  \begin{enumerate}
   \item 
  Let $\tilde w_1=s_{\alpha_2}s_{\alpha_1}$, then $\tilde w_1$
  satisfies $II$ but not $I$.
  \item Let  $\tilde w_2=s_{\alpha_1}s_{\alpha_2}$, then $\tilde w_2$
  satisfies $I$. 

  \end{enumerate}

  \end{example}
  
  \begin{example}

  Let $G$ be of type $G_2$, then we have  $\langle \alpha_{1}, \alpha_2 \rangle =-1$ and $\langle \alpha_{2}, \alpha_1 \rangle =-3$.
  \begin{enumerate}
   \item 
Let $\tilde w_1=s_{\alpha_2}s_{\alpha_1}$, then $\tilde w_1$
  satisfies $I$
\item Let $\tilde w_2=s_{\alpha_1}s_{\alpha_2}$, then
$\tilde w_2$
  does not satisfy $II$. 
  
  \end{enumerate}

  \end{example}

 \begin{example}\
 
  \begin{enumerate}
  
   \item Let $G$ be a Kac-Moody group with generalized Cartan matrix $
   \begin{bmatrix}
2 & -4 \\
-1 & 2
\end{bmatrix}.$\\
   (a) Let $\tilde w=s_{\alpha_1}s_{\alpha_2}s_{\alpha_1}$. Then we can see that $|\eta_1^+|=1$ with $\beta_{13}=2$ and so $\tilde w$ does not satisfy $II$.\\
   (b) Let $\tilde w=s_{\alpha_2}s_{\alpha_1}$. Then $|\eta_i^+|=0$ for $i=1, 2$ and $|\eta_1^-|=1$ with $\beta_{12}=-1$. Hence $\tilde w$ satisfies $I$.\\
   (c) Let $\tilde w=s_{\alpha_1}s_{\alpha_2}$. Then $|\eta_i^+|=0$ for $i=1, 2$ and $|\eta_1^-|=1$ with $\beta_{12}=-4$. Hence $\tilde w$ does not satisfy $II$.
   \item  Let $G$ be a Kac-Moody group with generalized Cartan matrix  $\begin{bmatrix}
2 & -2 \\
-2 & 2
\end{bmatrix}.$\\
    Let $\tilde w=s_{\alpha_1}s_{\alpha_2}$ or $\tilde w=s_{\alpha_2}s_{\alpha_1}$. Then  $|\eta_i^+|=0$  for $i=1, 2$ and $|\eta_2^-|=0$. Also note that  $|\eta_1^-|=1$ with $\beta_{12}=-2$. Hence $\tilde w$ satisfies $II$ but not $I$.\\
    
    \end{enumerate}
 \end{example}

  Now we have the following result:
 %Fanoness of the BSDH variety $Z(\tilde w)$.
 \begin{lemma}\label{toriclimitfano} \
 
 \begin{enumerate}
  \item 
   If $\tilde w$ satisfies $I$, then  $Y_{\tilde w}$ is Fano.
\item If $\tilde w$ satisfies $II$, then $Y_{\tilde w}$ is weak Fano.
 \end{enumerate}
 \end{lemma}
\begin{proof} This follows from Corollary \ref{f_r} and \cite[Theorem 6.3]{Moriconeoftowers} (see also \cite{correctionMoriconeoftowers}).
 %By Corollary \ref{f_r}, $Y_{\tilde w}$ is a Bott tower. Hence the proof follows from \cite[Theorem 6.2]{Moriconeoftowers}.
\end{proof}

Recall the following (see for instance \cite[Corollary 6.2]{Moriconeoftowers}):
\begin{lemma}\label{big}
 Let $X$ be a smooth projective variety and $D$ be an effective divisor. Let $supp(D)$ denote the support of $D$. If $X\setminus supp(D)$ is affine, then $D$ is big.
\end{lemma}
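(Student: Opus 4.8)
The plan is to show that the Iitaka dimension of $D$ equals $\dim X$, which is precisely the condition for $D$ to be big. Write $n=\dim X$ and $U:=X\setminus \mathrm{supp}(D)$; since $D$ is an effective divisor, $\mathrm{supp}(D)\subsetneq X$, so $U$ is a nonempty dense open subset with $\dim U=n$, and by hypothesis $U$ is affine. Recall that $D$ is big exactly when, for some $m\geq 1$ with $H^0(X,\mathcal O_X(mD))\neq 0$, the rational map $\phi_{|mD|}\colon X\dashrightarrow \mathbb P\bigl(H^0(X,\mathcal O_X(mD))^{*}\bigr)$ has image of dimension $n$. So I would aim to produce such an $m$, and the affineness of $U$ is exactly what feeds this.

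First I would fix the canonical section $s\in H^0(X,\mathcal O_X(D))$ with $\mathrm{div}(s)=D$; it is nowhere vanishing on $U$, so multiplication by $s^{-m}$ realizes $H^0(X,\mathcal O_X(mD))$ as a subspace of $\mathcal O(U)$. The crucial point, which I expect to be the main obstacle, is the standard extension fact that every $\varphi\in\mathcal O(U)$ has the form $\varphi=f/s^{m}$ for some $m\geq 0$ and $f\in H^0(X,\mathcal O_X(mD))$. Indeed, viewing $\varphi$ as a rational function on $X$, its polar divisor $(\varphi)_\infty$ can only be supported on $X\setminus U=\mathrm{supp}(D)$, because $\varphi$ is regular on $U$; since $X$ is smooth (hence normal) and projective, one may pick $m$ large enough that $(\varphi)+mD\geq 0$, i.e.\ $\varphi s^{m}$ extends to a global section of $\mathcal O_X(mD)$. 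This is the one place where properness, normality of $X$, and the fact that $X\setminus U$ is a divisor are genuinely used; everything after it is soft.

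Then I would finish by a transcendence-degree count. As $U$ is affine, $\mathcal O(U)$ is a finitely generated $\mathbb C$-algebra which is an integral domain, so its transcendence degree over $\mathbb C$ equals $\dim U=n$. Choosing algebra generators $\varphi_1,\dots,\varphi_k$ of $\mathcal O(U)$ and using the previous step, each $\varphi_j$ lies in $s^{-m_j}H^0(X,\mathcal O_X(m_jD))$; multiplying by suitable powers of $s$ we may take a single $m_0:=\max_j m_j$ so that $\varphi_1,\dots,\varphi_k\in s^{-m_0}H^0(X,\mathcal O_X(m_0D))$. Hence the field $\mathbb C(\varphi_1,\dots,\varphi_k)=\mathbb C\bigl(\mathcal O(U)\bigr)$ is contained in the subfield of $\mathbb C(X)$ generated by the ratios $h/h'$ with $h,h'\in H^0(X,\mathcal O_X(m_0D))\setminus\{0\}$ (take $h'=s^{m_0}$), and that subfield is the function field of the closure of the image of $\phi_{|m_0D|}$, of transcendence degree $\dim\overline{\phi_{|m_0D|}(X)}$. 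Therefore $n=\mathrm{tr.deg}_{\mathbb C}\mathcal O(U)\leq \dim\overline{\phi_{|m_0D|}(X)}\leq n$, forcing equality, so the Iitaka dimension of $D$ is $n$ and $D$ is big. The degenerate case $D=0$ (possible only when $X$ is a point, since then $U=X$ must be affine) is trivial.
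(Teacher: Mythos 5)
Your argument is correct. Note that the paper itself gives no proof of Lemma \ref{big}: it only cites \cite{Moriconeoftowers}, so there is no internal argument to compare against. What you write is the standard proof of this fact and it fills the gap soundly: the key extension step (every $\varphi\in\mathcal O(U)$ equals $f/s^{m}$ with $f\in H^0(X,\mathcal O_X(mD))$, because the polar divisor of $\varphi$ is pure codimension one by normality and each of its components occurs in $D$ with positive coefficient) is exactly right, and the transcendence-degree comparison then forces $\dim\overline{\phi_{|m_0D|}(X)}=\dim X$, i.e.\ maximal Iitaka dimension. Two cosmetic remarks: projectivity is not actually needed to arrange $(\varphi)+mD\geq 0$ (only normality and the effectivity of $D$ enter there; properness is used only so that bigness via Iitaka dimension makes sense), and in the final count you are implicitly using that $\operatorname{Frac}(\mathcal O(U))$ has transcendence degree $n$ because $U$ is a dense open subset of $X$ — worth stating, but harmless. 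The variant one usually sees (and presumably the one in the cited reference) embeds $U$ into affine space and observes that the extended coordinate functions make $\phi_{|mD|}$ injective on $U$; your transcendence-degree version is equivalent and equally clean.
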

We prove the following:

 \begin{theorem}\label{fanoforbsdh}\
 
 \begin{enumerate}
  \item 
  If $\tilde w$ satisfies 
  %I or 
  $I$, then $Z(\tilde w)$ is Fano.
\item If 
%$\tilde w$ is reduced and 
$\tilde w$ satisfies 
%I, II or 
$II$, then $Z(\tilde w)$ is weak Fano.
  \end{enumerate}

 \end{theorem}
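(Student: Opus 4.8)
The plan is to transfer Fanoness and weak Fanoness from the toric central fiber $Y_{\tilde w}$ to the general fiber $Z(\tilde w)$ along the family $\pi\colon\mathcal X\to\mathbb A^1$ of Theorem \ref{theorem1}, using Lemma \ref{toriclimitfano} as the input. Since $\pi$ is smooth and projective, the relative anticanonical sheaf $\omega_{\mathcal X/\mathbb A^1}^{-1}$ is a line bundle on $\mathcal X$ whose restriction to any fiber is the anticanonical bundle of that fiber; in particular it restricts to $-K_{Y_{\tilde w}}$ on $\mathcal X_0=Y_{\tilde w}$ and to $-K_{Z(\tilde w)}$ on $\mathcal X_u\cong Z(\tilde w)$ for $u\neq 0$. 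Moreover, as $\mathbb A^1$ is contractible and $\pi$ is smooth and proper, restriction identifies $N^1(\mathcal X)$, $N^1(Y_{\tilde w})$ and $N^1(Z(\tilde w))$, and under this identification the class of $\omega_{\mathcal X/\mathbb A^1}^{-1}$ is simultaneously $[-K_{Y_{\tilde w}}]$ and $[-K_{Z(\tilde w)}]$.

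For part (1): if $\tilde w$ satisfies $I$, then $-K_{Y_{\tilde w}}$ is ample by Lemma \ref{toriclimitfano}(1). Ampleness is open in the proper family $\pi$ (as already used in the proof of Corollary \ref{amplecone}, via \cite[Theorem 1.2.17]{lazarsfeld2004positivity}), so $\omega_{\mathcal X/\mathbb A^1}^{-1}$ restricts to an ample line bundle on $\mathcal X_u$ for all $u$ in some neighborhood of $0$; choosing such a $u\neq 0$ gives that $-K_{Z(\tilde w)}$ is ample, i.e. $Z(\tilde w)$ is Fano.

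For part (2): if $\tilde w$ satisfies $II$, then $Y_{\tilde w}$ is weak Fano by Lemma \ref{toriclimitfano}(2), so $-K_{Y_{\tilde w}}$ is nef and big. Here the argument splits. For nefness I pass to closures: Corollary \ref{amplecone} gives $Amp(Y_{\tilde w})\subseteq Amp(Z(\tilde w))$, hence $Nef(Y_{\tilde w})=\overline{Amp(Y_{\tilde w})}\subseteq \overline{Amp(Z(\tilde w))}=Nef(Z(\tilde w))$, and since $[-K_{Y_{\tilde w}}]=[-K_{Z(\tilde w)}]$ under the identification above, $-K_{Z(\tilde w)}$ is nef. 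For bigness I use that Euler characteristics, hence intersection numbers, are constant in the flat family $\pi$: $(-K_{Z(\tilde w)})^r=(\omega_{\mathcal X/\mathbb A^1}^{-1})^r\cdot[\mathcal X_u]=(-K_{Y_{\tilde w}})^r$, which is strictly positive because $-K_{Y_{\tilde w}}$ is nef and big. Since a nef divisor with positive top self-intersection is big, $-K_{Z(\tilde w)}$ is nef and big and $Z(\tilde w)$ is weak Fano.

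I expect the only real obstacle to be the weak Fano case, and within it the nefness of $-K_{Z(\tilde w)}$: nefness is not an open condition in families, so the proof of (1) cannot be copied verbatim, and one must instead exploit that the ample cones themselves are nested --- which is precisely the content of Corollary \ref{amplecone} --- and then take closures. Similarly, deducing bigness of $-K_{Z(\tilde w)}$ directly from upper semicontinuity of $h^0$ would go the wrong way (from the special fiber to the general fiber), which is why I route bigness through the top self-intersection number instead. The remaining ingredients --- that $\omega_{\mathcal X/\mathbb A^1}^{-1}$ restricts fiberwise to anticanonical bundles and that the three groups $N^1(-)$ are compatibly identified along the family --- are routine, following from the smoothness of $\pi$ and the triviality of $\mathrm{Pic}(\mathbb A^1)$.
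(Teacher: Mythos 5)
Your proof is correct, and part (1) coincides with the paper's argument (openness of ampleness in the proper family, via \cite[Theorem 1.2.17]{lazarsfeld2004positivity}). For part (2), however, you take a genuinely different route on both halves. For nefness, the paper invokes \cite[Theorem 1.4.14]{lazarsfeld2004positivity} (nefness of $-K_{\mathcal X_0}$ propagates to very general fibers, which suffices since all fibers over $u\neq 0$ are isomorphic), whereas you deduce it by taking closures in the inclusion of ample cones from Corollary \ref{amplecone}; the two are essentially equivalent in strength, but yours avoids the ``countable union of closed subsets'' caveat hidden in the family statement. For bigness the divergence is more substantial: the paper proves bigness of $-K_{Z(\tilde w)}$ \emph{intrinsically} on $Z(\tilde w)$, without the degeneration at all --- the complement of the boundary $\partial Z(\tilde w)$ is affine, so $\partial Z(\tilde w)$ is big by Lemma \ref{big}, and $\mathcal O(-K_{Z(\tilde w)})=\mathcal O(\partial Z(\tilde w))\otimes\mathcal L(\delta)$ with $\mathcal L(\delta)$ nef, so the anticanonical bundle is big as (big)$\otimes$(nef) --- while you transfer bigness through the family via constancy of the top self-intersection $(-K)^r$ in the flat family together with the fact that a nef class with positive top self-intersection is big. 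Both are valid; your argument is more uniform (everything flows from the toric central fiber and the family, so it needs no formula for $K_{Z(\tilde w)}$ in terms of the boundary), while the paper's bigness argument is unconditional on $\tilde w$ and exposes the geometric reason ($-K_{Z(\tilde w)}$ is always big for any expression), a fact your route does not recover since it presupposes condition $II$ to know $(-K_{Y_{\tilde w}})^r>0$. The identifications of $N^1(\mathcal X)$, $N^1(Y_{\tilde w})$ and $N^1(Z(\tilde w))$ and of the anticanonical classes that you label routine are indeed what the paper implicitly uses as well, since $\operatorname{Pic}$ of each fiber is freely generated by the restrictions of the divisors $\mathcal Z_i$.
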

\begin{proof}
 
First recall that the canonical line bundle 
$\mathcal O_{Z(\tilde w)}(K_{Z(\tilde w)})$ of $Z(\tilde w)$
is given by 
$$\mathcal O_{Z(\tilde w)}(K_{Z(\tilde w)})=\mathcal O_{Z(\tilde w)}(-\partial Z(\tilde w))\otimes \mathcal L_{\tilde w}(-\delta),$$
where $\partial Z(\tilde w)$ is the boundary divisor of 
$Z(\tilde w)$ and  $\delta \in N $
%\mathfrak h^*$, $\mathfrak h^*$ is the dual of the Cartan subalgebra, 
such that 
$\langle \delta, \check \alpha \rangle=1$ for all $\alpha\in S$, where $\check \alpha$ is the co-root of $\alpha$
(see \cite[Proposition 8.1.2]{kumar2012kac} and also \cite[Proposition 2]{ramanathan1985schubert}).
Note that if $G$ is finite dimensional, $\delta$ is half sum of the positive roots.

 By Theorem \ref{theorem1}, $\phi: \mathcal X \to \mathbb A^1$ is a smooth projective morphism 
 with $\mathcal X_0=Y_{\tilde w}$ and 
 $\mathcal X_{u}=Z(\tilde w)$ for $0\neq u\in \mathbb A^1$. 
  
Proof of (1):
 By \cite[Theorem 1.2.17]{lazarsfeld2004positivity}, if $-K_{\mathcal X_0}$ is ample then 
 $-K_{\mathcal X_u}$ is ample for  $u\neq 0$.
By Lemma \ref{toriclimitfano}, if $\tilde w$ satisfies 
%$I$ or 
$I$, then $-K_{Y_{\tilde w}}$ is ample. 
Hence we conclude that  if  $\tilde w$ satisfies 
%I or 
$I$,
then $Z(\tilde w)$ is Fano.

Proof of (2): 
First we prove $-K_{Z(\tilde w)}$ is {\it big}.
 Let $$Z_0:=Z(\tilde w)\setminus \partial Z(\tilde w).$$ 
 Note that $Z_0$ is an open affine subset of $Z(\tilde w)$. 
 Then by Lemma \ref{big},
$\partial Z(\tilde w)$ is {\it big}. Since $\delta$ is a regular dominant integral weight, $\mathcal L_{\tilde w}(\delta)$ is {\it nef} (see Section \cite[Section 7.2]{kumar2012kac}).
Since $$\mathcal O(-K_{Z(\tilde w)})=\mathcal 
O(\partial Z(\tilde w))\otimes \mathcal L_{\tilde w}(\delta),$$ 
we conclude $-K_{Z(\tilde w)}$ is {\it big}, as 
tensor product of a {\it big} and a {\it nef} line bundles is again a
big line bundle.
 By \cite[Theorem 1.4.14]{lazarsfeld2004positivity} and $\mathcal X_{u}=Z(\tilde w)$ for $u\neq 0$,
we can see that 
if $-K_{\mathcal X_0}$ is {\it nef} then $-K_{\mathcal X_u}$ is also {\it nef} for $u\neq 0$.
  Therefore, (2) follows from Lemma \ref{toriclimitfano}(2).
\end{proof}
  
 There exists an expression $\tilde w$ such that the BSDH variety $Z(\tilde w)$ is Fano 
 (respectively, weak Fano) but the toric limit $Y_{\tilde w}$ is not Fano (respectively, not weak Fano). 
 \begin{example}
  Let $G=SL(5, \mathbb C)$. 

\begin{enumerate}
 \item 
Let $\tilde w=s_{\alpha_1}s_{\alpha_1}$. 
Then $Z(\tilde w)\simeq \mathbb P^1\times \mathbb P^1$, 
which is Fano. 
The toric limit $Y_{\tilde w}\simeq \mathbb P(\mathcal O_{\mathbb P^1}\oplus \mathcal O _{\mathbb P^1}(2))$. It is well known that $Y_{\tilde w}$ is not Fano (also see \cite{correctionMoriconeoftowers}).
%Since $\tilde w$ does not satisfy
%$I$ or $I$, then by Lemma \ref{toriclimitfano}, 

\item Let $\tilde w=s_{\alpha_1}s_{\alpha_2}s_{\alpha_1}$.
Then it can be seen that $Z(\tilde w)$ is Fano (see \cite[Example 5.4]{Charyfano}).
By the Theorem in \cite{correctionMoriconeoftowers}, we can see that the toric limit $Y_{\tilde w}$ is weak Fano but not Fano.

\end{enumerate}

\end{example}
\begin{example}
 Let $G=SO(7, k)$, i.e. $G$ is of type $B_3$. Let $\tilde w=s_{\alpha_2}s_{\alpha_3}s_{\alpha_1}s_{\alpha_2}$.
By Theorem in  \cite{correctionMoriconeoftowers}, the toric limit $Y_{\tilde w}$ is not weak Fano.
Also we can see  $Z(\tilde w)$ is weak Fano but not
Fano (see \cite[Theorem 5.3]{Charyfano}).
\end{example}

\subsection{Local rigidity of BSDH varieties}\label{Rigidity}
In this section we obtain some vanishing results for the cohomology of tangent bundle of the
toric limit $Y_{\tilde w}$ and $Z(\tilde w)$.
 Let $T_X$ denote the tangent bundle of $X$, where $X=Y_{\tilde w}$ or $Z(\tilde w)$. Then we have 
 \begin{corollary}\label{vanishing1}\

  \begin{enumerate}
   \item 
If $\tilde w$ satisfies 
$I$, then  $H^i(Y_{\tilde w}, T_{Y_{\tilde w}})=0$ for all $i\geq 1$. In particular, $Y_{\tilde w}$ is locally rigid. 
 \item If $\tilde w$ satisfies 
 $I$, then  $H^i(Z(\tilde w), T_{Z(\tilde w)})=0$ for all $i\geq 1$. In particular, $Z(\tilde w)$ is locally rigid. 
   \end{enumerate}
 \end{corollary}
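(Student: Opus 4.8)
The plan is to deduce part (1) from the Bott-tower structure of $Y_{\tilde w}$ together with the known cohomology vanishing for the tangent bundle of a Fano toric variety, and then to transfer the statement to $Z(\tilde w)$ by the degeneration $\pi:\mathcal X\to\mathbb A^1$ and semi-continuity, exactly in the spirit of the proofs of Lemma \ref{amplenef}-based results and Theorem \ref{fanoforbsdh}. For (1): by Corollary \ref{tl}, $Y_{\tilde w}$ is a smooth complete toric variety (a Bott tower), and by Lemma \ref{toriclimitfano}(1) the hypothesis that $\tilde w$ satisfies $I$ is equivalent to $Y_{\tilde w}$ being Fano. For a smooth complete toric variety $X$ one has the generalized Euler (conormal) sequence
\[
0\longrightarrow \mathcal O_X^{\oplus r}\longrightarrow \bigoplus_{\rho\in\Sigma(1)}\mathcal O_X(D_\rho)\longrightarrow T_X\longrightarrow 0,
\]
so it suffices to show $H^i(Y_{\tilde w},\mathcal O(D_\rho))=0$ for $i\geq 1$ and all $\rho$, and that the map $H^0\big(\bigoplus_\rho \mathcal O(D_\rho)\big)\to H^0(T_X)$ is surjective while $H^1(\mathcal O_X^{\oplus r})=0$ (the latter is immediate since toric varieties have $H^1(\mathcal O_X)=0$). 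Each $D_\rho=D_{\rho_i^{\epsilon}}$ is one of the torus-invariant prime divisors of the Bott tower, and I would check that it is nef — indeed, from Proposition \ref{fan107} and the primitive relations $P_i=\{\rho_i^+,\rho_i^-\}$ computed via $M_{\tilde w}$, each $D_{\rho_i^+}$ is a pullback of an ample (in fact very ample) generator from the $i$-th stage of the tower, hence nef on $Y_{\tilde w}$; similarly for $D_{\rho_i^-}$, using the identity $D_{\rho_i^-}\sim$ a combination with nonnegative $d_\bullet$ when $\tilde w$ satisfies $I$. Then Demazure vanishing (\cite[Theorem 9.2.3]{cox2011toric}) gives $H^i(Y_{\tilde w},\mathcal O(D_\rho))=0$ for $i>0$. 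This yields $H^i(Y_{\tilde w},T_{Y_{\tilde w}})=0$ for all $i\geq 2$ at once, and for $i=1$ from the long exact sequence and $H^1(\mathcal O_{Y_{\tilde w}})=0=H^1(\mathcal O(D_\rho))$. Local rigidity is the standard consequence of $H^1(Y_{\tilde w},T_{Y_{\tilde w}})=0$.

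For (2): apply Theorem \ref{theorem1}, so that $\pi:\mathcal X\to\mathbb A^1$ is smooth projective with $\mathcal X_0=Y_{\tilde w}$ and $\mathcal X_u\cong Z(\tilde w)$ for $u\neq 0$. Since $\pi$ is smooth, the relative tangent sheaf $T_{\mathcal X/\mathbb A^1}$ is locally free and restricts to $T_{Y_{\tilde w}}$ on the central fibre and to $T_{Z(\tilde w)}$ on the general fibre. By the semi-continuity theorem (\cite[Theorem 12.8]{hartshorne}), the vanishing $H^i(\mathcal X_0,T_{\mathcal X_0})=0$ established in part (1) forces $H^i(\mathcal X_u,T_{\mathcal X_u})=0$ for $u$ in a neighbourhood of $0$, hence for some $u\neq 0$, giving $H^i(Z(\tilde w),T_{Z(\tilde w)})=0$ for all $i\geq 1$; again $H^1=0$ gives local rigidity. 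Alternatively one can run Grauert's theorem / cohomology-and-base-change fibrewise on $\pi$, which is the same argument used in the Lemma preceding this section.

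The main obstacle I anticipate is part (1), specifically verifying that under condition $I$ every torus-invariant prime divisor $D_{\rho_i^{\pm}}$ of the Bott tower $Y_{\tilde w}$ is nef, so that Demazure vanishing applies: this requires unpacking the primitive relations encoded by the upper-triangular matrix $M_{\tilde w}$ and the explicit conditions $N_i^1$, and checking that the $d_j$-coefficients attached to each $D_{\rho_i^{\pm}}$ are nonnegative. Once nef-ness of all the $D_\rho$ is in hand, the rest is the formal Euler-sequence bookkeeping and a routine semi-continuity transfer; indeed this is precisely why the statement is phrased as a \emph{Corollary} — it should follow from Corollary \ref{tl}, Lemma \ref{toriclimitfano}, and the toric vanishing theorems already cited in Section \ref{linebundles on BSDH}, with \cite[Section 8]{Chary11} and \cite{Moriconeoftowers} as precedents.
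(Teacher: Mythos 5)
Your part (2) is exactly the paper's argument (smoothness of $\pi$, identification of the fibres via Theorem \ref{theorem1}, and semi-continuity), and your reduction of part (1) to the Fanoness of $Y_{\tilde w}$ via Lemma \ref{toriclimitfano} also matches. The gap is in the step you yourself flag as the main obstacle: the claim that, under condition $I$, every torus-invariant prime divisor $D_{\rho_i^{\pm}}$ of the Bott tower is nef, so that Demazure vanishing applies term by term in the Euler sequence. This is false. Take $G=SL(3,\mathbb C)$ and $\tilde w=s_{\alpha_1}s_{\alpha_2}$, so $\beta_{12}=-1$ and $\tilde w$ satisfies $I$; then $Y_{\tilde w}\simeq\mathbb P(\mathcal O_{\mathbb P^1}\oplus\mathcal O_{\mathbb P^1}(1))$ is the Hirzebruch surface $\mathbb F_1$, with rays $e_1^+=(1,0)$, $e_2^+=(0,1)$, $e_1^-=(-1,1)$, $e_2^-=(0,-1)$. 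The wall relation $e_1^+ - e_2^+ + e_1^- = 0$ gives $D_{\rho_2^+}\cdot D_{\rho_2^+}=-1$, i.e.\ $D_{\rho_2^+}$ is the $(-1)$-curve of $\mathbb F_1$ and is not nef. More generally $D_{\rho_i^+}$ for $i\geq 2$ is (the pullback of) a \emph{section} of the $i$-th $\mathbb P^1$-bundle, not the pullback of an ample class from $Y_{i-1}$ — only the fibre classes $f_r^*(\cdot)$ behave as you describe. So your route to $H^i(Y_{\tilde w},\mathcal O(D_\rho))=0$ collapses; nefness of all boundary divisors simply does not follow from condition $I$ (in the $\mathbb F_1$ example the vanishing of $H^{>0}(\mathcal O(D_{\rho_2^+}))$ happens to hold, but not for the reason you give, and you would still owe a uniform argument).

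The paper sidesteps this entirely: once Lemma \ref{toriclimitfano} gives that $Y_{\tilde w}$ is a smooth Fano toric variety, it quotes \cite[Proposition 4.2]{Bien1996}, which asserts $H^i(X,T_X)=0$ for all $i\geq 1$ for such varieties. That citation is the missing ingredient in your write-up. If you insist on an Euler-sequence proof, you would need the finer statement that $H^i(X,\mathcal O_X(D_\rho))=0$ for $i>0$ on a smooth toric Fano (provable by the Demazure-type weight-space computation of $H^i(X,\mathcal O(D))_m$, not by nefness), or else work with the weight decomposition of $H^i(X,T_X)$ directly; as written, the argument for part (1) does not go through.
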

\begin{proof}
Proof of (1): 
If $\tilde w$ satisfies 
%$I$ or 
$I$, then by Lemma \ref{toriclimitfano}, $Y_{\tilde w}$ is a Fano variety.
By \cite[Proposition 4.2]{Bien1996}, since $Y_{\tilde w}$ is a smooth Fano toric variety, we get 
 $H^i(Y_{\tilde w}, T_{Y_{\tilde w}})=0 ~\mbox{for all}~  i\geq 1.$

Proof of (2):
From Theorem \ref{theorem1}, 
$\pi: \mathcal X \to \mathbb A^1$ is a smooth projective morphism 
 with $\mathcal X_0=Y_{\tilde w}$ and 
 $\mathcal X_{u}=Z(\tilde w)$ for $u\in \mathbb A^1$ , $u\neq 0$.  Hence (2) follows from (1)
  by semi-continuity theorem (see \cite[Theorem 12.8]{hartshorne}).
\end{proof}

\subsection{Log Fano BSDH varieties}

In \cite{anderson2014effective} and \cite{anderson2014schubert} log Fanoness of Schubert varieties and BSDH varieties were studied 
  respectively. Now we characterize the (suitably chosen) $\mathbb Q$-divisors $D'$ in $Z(\tilde w)$) for which $(Z(\tilde w), D')$ is log Fano.
 Recall that $Z_i=\{[(p_1,\ldots, p_r)]\in Z(\tilde w): p_i\in B\}$ is a divisor in $Z(\tilde w)$ (see Section \ref{preliminaries}).
 Let $\gamma_i=s_{\beta_r}\cdots s_{\beta_{i+1}}(\beta_i)$ for $1\leq i \leq r$.
Then, 
\begin{equation}\label{bi} \mathcal L_{\tilde w}(\delta)=\sum_{i=1}^rb_iZ_{i}  ~\mbox{with}~ 
 b_i=\langle \delta, \check \gamma_i \rangle=ht(\gamma_i),
\end{equation}
where $\delta$ is as in Section \ref{6.2BSDH} (see page 10),
$\mathcal L_{\tilde w}(\delta)$ is the homogeneous line bundle on $Z(\tilde w)$ corresponding to $\delta$ and  $ht(\beta)$ for a root $\beta=\sum_{i=1}^n n_i\alpha_i$, is the height defined by $ht(\beta)=\sum_{i=1}^nn_i$
(see \cite[Proof of Proposition 10]{mehta1985frobenius}).
When $\tilde w$ is reduced, $\gamma_i$ is a positive root and  
we can see the relation (\ref{bi}) from the 
Chevalley formula for intersection of  Schubert variety by a divisor
(see \cite[Page 410]{anderson2014schubert} or \cite{chevalley1994decompositions}).
It is known that  
  \begin{equation}\label{MR}
   -K_{Z(\tilde w)}=\sum_{i=1}^r(b_i+1)Z_{i}
     \end{equation}
(see \cite[Proposition 4]{mehta1985frobenius}).
%or \cite{anderson2014schubert}).
     Let $D'=\sum_{i=1}^ra_iZ_{i}$ be a effective $\mathbb Q$-divisor in $Z(\tilde w)$, with 
$\lfloor D'\rfloor=0$, where  $\lfloor~\sum_i a_iZ_i\rfloor=\sum_i\lfloor a_i\rfloor Z_i$, $\lfloor x\rfloor$ is the greatest integer $\leq x$.
Then by (\ref{MR}), we get $$-(K_{Z(\tilde w)}+D')=\sum_{i=1}^r(b_i+1+a_i)Z_i.$$
For $1\leq i\leq r$, define 
$$f_i:=(b_i+1+a_i)-\sum_{\gamma_j\in \gamma_{P_i}(1)^+}c_j(b_j+1+a_j),$$
where $\gamma_{P_i}(1)^+:=\gamma_{P_i}(1)\cap \{\rho_l^+:1\leq l\leq r\}$
%; $\gamma_{P_i}(1):=\gamma_{P_i}\cap\Sigma(1)$ 
and $\gamma_{P_i}$ is the cone as in (\ref{4.1}) for the toric limit $Y_{\tilde w}$.

 Recall that if $X$ is smooth and $D$ is a normal crossing divisor, the pair $(X, D)$ is log Fano 
 if and only if $\lfloor D \rfloor=0$ and $-(K_X+D)$ is ample (see \cite[Lemma 2.30, Corollary 2.31 and Definition 2.34]{kollar2008birational}).

We prove,
\begin{corollary}\label{logfano02}
 The pair $(Z(\tilde w), D')$ is log Fano if $f_i>0$ for all $1\leq i\leq r.$
\end{corollary}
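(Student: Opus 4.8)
The plan is to reduce the statement to the ampleness of the divisor $-(K_{Z(\tilde w)}+D')$ and then to read that ampleness off the toric limit $Y_{\tilde w}$ via Lemma \ref{amplenef}, just as in the proofs of Lemma \ref{toriclimitfano} and Theorem \ref{fanoforbsdh}. First I would observe that $Z(\tilde w)$ is smooth and $\operatorname{Supp}(D')\subseteq \partial Z(\tilde w)=\bigcup_{i=1}^r Z_i$, which is a simple normal crossing divisor; hence the pair $(Z(\tilde w),D')$ is log smooth, and since $\lfloor D'\rfloor=0$ by hypothesis, the criterion recalled just before the statement (\cite[Lemma 2.30, Corollary 2.31, Definition 2.34]{kollar2008birational}) shows that $(Z(\tilde w),D')$ is log Fano as soon as $-(K_{Z(\tilde w)}+D')$ is ample. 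By \eqref{MR} and the choice of $D'$ this divisor is $-(K_{Z(\tilde w)}+D')=\sum_{i=1}^r(b_i+1+a_i)Z_i$, so it remains to prove this $\mathbb Q$-divisor is ample when all $f_i>0$.

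Next I would transport the question to the family $\pi:\mathcal X\to\mathbb A^1$ of Theorem \ref{theorem1}. On the smooth variety $\mathcal X$ consider the relative $\mathbb Q$-Cartier divisor $\mathcal D:=\sum_{i=1}^r(b_i+1+a_i)\mathcal Z_i$. Each $\mathcal Z_i$ is integral and flat over $\mathbb A^1$ and contains no fiber of $\pi$, so restriction to a fiber is compatible with $\mathcal O_{\mathcal X}(\mathcal D)$; using Theorem \ref{theorem1} together with the identifications $\mathcal Z_i^u=Z_i$ for $u\neq 0$ and $\mathcal Z_i^0=D_{\rho_i^+}$ recorded in Section \ref{linebundles on BSDH}, one gets that $\mathcal D|_{\mathcal X_u}=-(K_{Z(\tilde w)}+D')$ for $u\neq0$ and $\mathcal D|_{\mathcal X_0}=\sum_{i=1}^r(b_i+1+a_i)D_{\rho_i^+}$ on $Y_{\tilde w}$. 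Thus, if the toric divisor $\mathcal D_0:=\sum_{i=1}^r(b_i+1+a_i)D_{\rho_i^+}$ is ample on $Y_{\tilde w}$, then by openness of ampleness for the proper morphism $\pi$ (\cite[Theorem 1.2.17]{lazarsfeld2004positivity}, exactly as in Corollary \ref{amplecone}) the divisor $\mathcal D_u=-(K_{Z(\tilde w)}+D')$ is ample on $Z(\tilde w)$ for $u$ near $0$, completing the argument.

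Finally I would check ampleness of $\mathcal D_0$ with Lemma \ref{amplenef}. Writing $\mathcal D_0=\sum_{\rho\in\Sigma(1)}a_\rho D_\rho$ in the notation of Section \ref{botttowers}, we have $a_{\rho_i^+}=b_i+1+a_i$ and $a_{\rho_i^-}=0$ for all $i$. Hence in $d_i=a_{\rho_i^+}+a_{\rho_i^-}-\sum_{\gamma_j\in\gamma_{P_i}(1)}c_j a_{\gamma_j}$ only the rays of $\gamma_{P_i}$ of "$+$" type contribute to the sum, and the formula collapses to $d_i=(b_i+1+a_i)-\sum_{\gamma_j\in\gamma_{P_i}(1)^+}c_j(b_j+1+a_j)=f_i$. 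Therefore, by Lemma \ref{amplenef}(1) — which extends to $\mathbb Q$-divisors since clearing denominators scales the $d_i$ linearly — the condition $f_i>0$ for all $i$ is precisely ampleness of $\mathcal D_0$, and we are done. The only genuinely delicate point is this last bookkeeping, namely tracking which rays $\gamma_j$ of the cone $\gamma_{P_i}$ are of "$+$" type (where $a_{\gamma_j}=b_j+1+a_j$) and which are of "$-$" type (where $a_{\gamma_j}=0$); everything else is a routine combination of Theorem \ref{theorem1}, Lemma \ref{amplenef}, and the semicontinuity/openness arguments already used in the preceding sections.
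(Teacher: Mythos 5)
Your proposal is correct and follows essentially the same route as the paper: reduce log Fanoness to ampleness of $-(K_{Z(\tilde w)}+D')$ via $\lfloor D'\rfloor=0$, pass through the degeneration $\pi:\mathcal X\to\mathbb A^1$ using $\mathcal Z_i^u=Z_i$ and $\mathcal Z_i^0=D_{\rho_i^+}$ together with openness of ampleness, and then apply Lemma \ref{amplenef} on $Y_{\tilde w}$, where your careful observation that $a_{\rho_i^-}=0$ collapses $d_i$ to $f_i$ is exactly why the paper's sum runs over $\gamma_{P_i}(1)^+$. The extra details you supply (log smoothness of the pair, flatness of the $\mathcal Z_i$ over $\mathbb A^1$) only make explicit what the paper leaves implicit.
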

\begin{proof}
By definition of $D'$, the pair $(Z(\tilde w), D')$ is 
log Fano if and only if 
$-(K_{Z(\tilde w)}+D')$ is ample.
Now we prove $-(K_{Z(\tilde w)}+D' )$ is ample if  $f_i>0$ for all $1\leq i\leq r.$
Recall that $D_{\rho_i^+}$ is the divisor corresponding to $\rho_i^+\in \Sigma(1)$ and 
$ \mathcal {Z}_i^{x}= \pi^{-1}(x)\cap \mathcal Z_i$ for $x\in k$
(see Section \ref{preliminaries} and Section \ref{degeneration}).

By Theorem \ref{theorem1}, we have 
\begin{equation}\label{log1}
 \mathcal {Z}_i^{x}=Z_{i} ~\mbox{ for}~ x\neq 0 ~\mbox{ and}~
\mathcal Z_i^{0}=D_{\rho_i^+} .
\end{equation}

Assume that $f_i>0$ for all $1\leq i \leq r.$
 By (\ref{log1}) and by semi-continuity (see \cite[Theorem 1.2.7]{lazarsfeld2004positivity})
to prove $(Z(\tilde w), D')$ is log Fano it is enough to prove
$$\sum_{i=1}^r(b_i+1+a_i)D_{\rho_i^+}~\mbox{is ample}~.$$
By Lemma \ref{amplenef}, we see that $\sum_{i=1}^r(b_i+1+a_i)D_{\rho_i^+}$ is ample if and only if 
$$f_i=((b_i+1+a_i)-\sum_{\gamma_j\in \gamma_{P_i}(1)^+}c_j(b_j+1+a_j))>0 ~\mbox{for all} ~1\leq i \leq r.$$
Hence we conclude that $(Z(\tilde w), D')$ is log Fano. 
 \end{proof}

\section{Geometric properties of the toric limit}\label{AJPPK}
 
In this section we are going to recover the results of \cite{parameswaran2016toric} by using methods of toric geometry. 
In \cite{parameswaran2016toric}, they have assumed that $G$ is a simple algebraic group.
In our situation $G$ is a Kac-Moody group.
Recall the following:
\begin{enumerate}
 \item $\tilde w=s_{\beta_1}\cdots s_{\beta_{r}}$ and 
 $\tilde w'=s_{\beta_1}\cdots s_{\beta_{r-1}}$. 
\item The toric morphism 
$f_r:Y_{\tilde w} \to Y_{\tilde w'}$ is
induced by the lattice 
map 
$\overline f_r:\mathbb Z^r\to \mathbb Z^{r-1},$ 
the projection onto the first $r-1$ coordinates.
\end{enumerate}

As we discussed in Section \ref{degeneration}, there are two disjoint toric sections for the $\mathbb P^1$-fibration 
$f_r:Y_{\tilde w}\to Y_{\tilde w'}$ (see Lemma \ref{f_r}).
\begin{definition}\

\begin{enumerate}
 \item 
 
\underline{\bf Schubert and non-Schubert sections:}
We call the section corresponding to the maximal cone $\rho^+_r$ (respectively, $\rho_r^{-}$) in $\Sigma_F$  
(the fan of the fiber 
of $f_r$)  by  
\textquoteleft Schubert section $\sigma_{r-1}^0$' 
(respectively,  \textquoteleft non-Schubert section $\sigma_{r-1}^1$' ). \\
\item \underline{\bf Schubert point:} Let $\sigma\in \Sigma$ be the maximal cone generated by 
$\{e^+_1, \ldots, e^+_{r}\}.$
We call the point in $Y_{\tilde w}$ corresponding to the maximal cone $\sigma$  by \textquoteleft Schubert point'.\\

\item \underline{\bf Schubert line:}
We call the fiber of  $f_r$ over the Schubert point  by \textquoteleft Schubert line $L_r$'.

\end{enumerate}

\end{definition}

Note that these definitions agree with that of in \cite[Section 4]{parameswaran2016toric}.
Now onwards we denote $ \tilde w=(1,\ldots, r)$ (respectively, $\tilde w'=(1,\ldots, r-1)$) for the expression
$\tilde w=s_{\beta_1}\cdots s_{\beta_r}$ (respectively, $\tilde w'=s_{\beta_1}\cdots s_{\beta_{r-1}}$ ). 
Let $I=(i_1, \ldots , i_m)$ be a subsequence of $\tilde w$. Inductively we define the curve $L_{I}$ corresponding to $I$. 
Let 
$L_{I'}$ be the curve in $Y_{\tilde w'}$ corresponding to the
subsequence $I'=(i_1, \ldots, i_{m-1})$ of $I$ . Then define
$$L_{I}:=\sigma^1_{r-1}(L_{I'}) ~\mbox{and}~
 \sigma^0_{r-1}(L_{I'})={L_{I'}}.$$

Recall some more notations.
Let $X$ be a smooth projective variety, we define 
$$N_1(X)_{\mathbb Z}:=\{\sum_{\mbox{finite}}a_iC_i : a_{i}\in \mathbb Z , C_i ~\mbox{irreducible curve in } ~X\}/\equiv $$
where $\equiv$ is the numerical equivalence, i.e. $Z\equiv Z'$
if and only if $D\cdot Z=D\cdot Z'$ for all divisors $D$ in $X$.
We denote by $[C]$ the class of $C$ in $N_1(X)_{\mathbb Z}$.
Let $N_1(X):=N_1(X)_{\mathbb Z}\otimes \mathbb R$. 
It is a well known fact that $N_1(X)$ is a finite dimensional real vector space
dual to $N^1(X)$ (see \cite[Proposition 4, \S 1, Chapter IV]{kleiman1966toward}).
We have the following result:
\begin{lemma} The classes of Schubert lines $L_j$, $1\leq j\leq r$ form a basis of $N_1(Y_{\tilde w})$.
  \end{lemma}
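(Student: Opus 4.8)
The plan is to show that the $r$ classes $[L_1],\dots,[L_r]$ span $N_1(Y_{\tilde w})$ and are linearly independent, and since $\dim N_1(Y_{\tilde w})=r$ (because $Y_{\tilde w}$ is a Bott tower of height $r$, so its Picard rank is $r$ by Proposition \ref{fan107}(2), and $N_1$ is dual to $N^1$), either statement suffices but I would verify both for transparency. The natural tool is the basis of divisor classes: by Proposition \ref{fan107}(2) the divisors $\mathcal Z_i^0=D_{\rho_i^+}$, $1\le i\le r$, form a basis of the divisor class group, so to prove that $\{[L_j]\}$ is a basis it is enough to compute the intersection pairing matrix $\bigl(D_{\rho_i^+}\cdot L_j\bigr)_{1\le i,j\le r}$ and show it is invertible over $\mathbb Z$ (in fact I expect it to be upper- or lower-triangular with $\pm 1$ on the diagonal).

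First I would set up the recursion. By Lemma \ref{f_r}, $f_r\colon Y_{\tilde w}\to Y_{\tilde w'}$ is a toric $\mathbb P^1$-fibration with Schubert section $\sigma^0_{r-1}$ and non-Schubert section $\sigma^1_{r-1}$, and by construction $L_r$ is the fiber of $f_r$ over the Schubert point of $Y_{\tilde w'}$, while $L_j$ for $j<r$ is $\sigma^1_{r-1}(L_j')$ where $L_j'$ is the corresponding Schubert line in $Y_{\tilde w'}$. Now compute: $D_{\rho_r^+}$ restricts to a point on each fiber of $f_r$, so $D_{\rho_r^+}\cdot L_r=1$; and for $i<r$, $D_{\rho_i^+}=f_r^*D_{\rho_i^+}{}'$ (the pullback of the corresponding divisor on $Y_{\tilde w'}$, since $\overline f_r$ is the projection onto the first $r-1$ coordinates and $e_i^+\mapsto e_i^+$), so $D_{\rho_i^+}\cdot L_r = D_{\rho_i^+}{}'\cdot (f_r)_*L_r=0$ as $f_r$ contracts $L_r$ to a point. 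This gives the last row (or column) of the pairing matrix. For $L_j$ with $j<r$, I would use the projection formula: since $L_j=\sigma^1_{r-1}(L_j')$ and $(f_r)_*L_j=L_j'$, we get $D_{\rho_i^+}\cdot L_j = D_{\rho_i^+}{}'\cdot L_j'$ for $i<r$ (pullback divisors), reducing the $(i,j)$-entry with $i,j<r$ to the analogous pairing on $Y_{\tilde w'}$; and $D_{\rho_r^+}\cdot L_j = \sigma^1_{r-1}(Y_{\tilde w'})\cdot \sigma^1_{r-1}(L_j')$, which is the self-intersection-type number of the non-Schubert section along $L_j'$, a well-defined integer. Thus by induction on $r$ the matrix is block lower-triangular, with the lower-right $1\times 1$ block equal to $1$ and the upper-left $(r-1)\times(r-1)$ block being the pairing matrix for $Y_{\tilde w'}$, which is invertible over $\mathbb Z$ by the inductive hypothesis; the base case $r=1$ is $Y_{(1)}=\mathbb P^1$ with $D_{\rho_1^+}\cdot L_1=1$.

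The main obstacle, and the one step I would carry out most carefully, is the bookkeeping in the inductive step: one must check that under the splitting of the fan $\Sigma$ into $\Sigma'$ and $\{e_r^+,0,e_r^-\}$ the divisor $D_{\rho_i^+}$ for $i<r$ really is the pullback $f_r^*D_{\rho_i^+}{}'$ (this is exactly \cite[Corollary 3.3]{Moriconeoftowers} or a direct fan computation, using $e_i^-=-e_i^+-\sum_{j>i}\beta_{ij}e_j^+$ to see how the rays of $\Sigma$ map down), and that the Schubert point of $Y_{\tilde w}$ sits over the Schubert point of $Y_{\tilde w'}$ via the maximal cone generated by $e_1^+,\dots,e_r^+$, so that the curves $L_j$ are compatible with the fibration as claimed in the definition. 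Once these compatibilities are in place, the triangularity of the intersection matrix — hence the linear independence and spanning of the $[L_j]$ — is immediate, and the lemma follows.
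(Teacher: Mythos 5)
Your proof is correct in substance and follows the same inductive skeleton as the paper: induct on $r$ using the toric $\mathbb P^1$-fibration $f_r\colon Y_{\tilde w}\to Y_{\tilde w'}$ of Lemma \ref{f_r}, and show that the fiber class $[L_r]$ together with the section-images of the Schubert lines of $Y_{\tilde w'}$ gives a basis. Where the paper simply invokes Barton's structural result \cite[Lemma 1.1]{barton1971tensor} (for a projective bundle with a section, $N_1$ is generated by the fiber class and the image of $N_1$ of the base under the section), you instead verify the statement directly by pairing against the divisor basis $D_{\rho_1^+},\dots,D_{\rho_r^+}$ of $N^1(Y_{\tilde w})$ and exhibiting a block-triangular, unimodular intersection matrix via $f_r^*D_{\rho_i^+}{}'=D_{\rho_i^+}$ for $i<r$ and the projection formula. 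This is more work but entirely self-contained and makes the duality with Proposition \ref{fan107}(2) explicit; both routes are valid. One small factual slip: for $j<r$ the Schubert line $L_j$ is the image of $L_j'$ under the \emph{Schubert} section $\sigma^0_{r-1}$ (so $L_j\subset D_{\rho_r^+}$), not the non-Schubert section $\sigma^1_{r-1}$ as you wrote --- the non-Schubert section is only used for the curves $L_I$ attached to longer subsequences $I$. This does not affect your argument, since the entries $D_{\rho_r^+}\cdot L_j$ for $j<r$ sit in the off-diagonal block and you only need them to be well-defined integers (with the correct identification they equal $\beta_{jr}$ rather than $0$, but triangularity and unimodularity are unchanged).
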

\begin{proof} Proof is by induction on $r$. Assume that the result is true for $r-1$.
 Since $Y_{\tilde w}$ is a projective bundle over $Y_{\tilde w'}$ (see Lemma \ref{f_r}), then 
 by \cite[Lemma 1.1]{barton1971tensor}, 
  $$L_{r} ~\mbox{and}~ \sigma^0_{r-1}(L_{j}) ~\mbox{for}~ 1\leq j\leq r$$
 (the image of $L_{j}$ by the Schubert section in $Y_{\tilde w}$)
form a basis of $N_1(Y_{\tilde w})$. By definition of $L_{I}$, we have
$$\sigma^0_{r-1}(L_j)=L_{j} ~\mbox{for}~
1\leq j\leq r-1$$ and hence the result follows.
 \end{proof}

Let $1\leq j\leq r$.
Let $\mathscr D:=\{ e_l^{\epsilon_l}: 1\leq l\leq r  ~\mbox{and}~ \epsilon_l=+ ~\mbox{for all}~~ l\}$ .
Let $\mathscr D'_{j}:=\{e_l^{\epsilon_l}: 1\leq l\leq r  ~\mbox{and}~ \epsilon_l=+ ~\mbox{for all}~~ l\neq l; \epsilon_j=-\} $ .

 \begin{lemma}\label{LI}
 
   Fix $1\leq j\leq r$. Then the Schubert line $L_j$ is given by $$L_j=V(\tau_j)~,~\mbox{with}~ \tau_j=\sigma\cap \sigma'_j,$$ 
  intersection of two maximal cones in $\Sigma$, where $\sigma$ (respectively, $\sigma'_j$) is 
  generated by $\mathscr D$ (respectively, $\mathscr D'_j$).
   
\end{lemma}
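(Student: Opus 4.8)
The plan is to argue by induction on $r$, tracking the Schubert line $L_j$ through the projective-bundle structure $f_r : Y_{\tilde w} \to Y_{\tilde w'}$ established in Lemma \ref{f_r}. Recall that $L_j$ was defined inductively: for $j < r$ we have $L_j = \sigma^0_{r-1}(L_j^{\,\prime})$, the image of the Schubert line in $Y_{\tilde w'}$ under the Schubert section, while $L_r$ is the fiber of $f_r$ over the Schubert point. So I would split into the two cases $j = r$ and $j < r$.

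For the base case $j = r$ the statement is essentially a translation of the definitions: the Schubert line $L_r$ is the fiber of the toric $\mathbb P^1$-fibration $f_r$ over the Schubert point $V(\sigma)$, where $\sigma$ is generated by $\{e_1^+, \dots, e_r^+\}$. Under the orbit-cone correspondence for toric morphisms (see \cite[Chapter 3]{cox2011toric}), the preimage under $f_r$ of the torus-fixed point $V(\sigma)$ of $Y_{\tilde w'}$, with $\sigma$ the cone over $\{e_1^+, \dots, e_{r-1}^+\}$, is the orbit closure $V(\tau_r)$ where $\tau_r$ is the common face of the two maximal cones of $\Sigma$ lying over $\sigma$; by Proposition \ref{fan107}(1) these two maximal cones are exactly those generated by $\{e_1^+, \dots, e_{r-1}^+, e_r^+\} = \mathscr D$ and by $\{e_1^+, \dots, e_{r-1}^+, e_r^-\} = \mathscr D'_r$, so $\tau_r = \sigma \cap \sigma'_r$ is generated by $\{e_1^+, \dots, e_{r-1}^+\}$ and $V(\tau_r) \cong \mathbb P^1$ is the fiber, as claimed.

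For the inductive step $j < r$, I would invoke the induction hypothesis on $Y_{\tilde w'}$: the Schubert line $L_j^{\,\prime}$ in $Y_{\tilde w'}$ is $V(\tau_j^{\,\prime})$ with $\tau_j^{\,\prime}$ the intersection of the maximal cones of $\Sigma'$ generated by the two analogous subsets of $\{e_1^+, \dots, e_{r-1}^+\} \cup \{e_1^-,\dots,e_{r-1}^-\}$. Then $L_j = \sigma^0_{r-1}(L_j^{\,\prime})$ is the image of this curve under the Schubert section $\sigma^0_{r-1}$, which corresponds to the ray $\rho_r^+$ of the fiber fan. In fan-theoretic terms, the section $\sigma^0_{r-1}$ sends the orbit closure $V(\tau_j^{\,\prime}) \subset Y_{\tilde w'}$ to the orbit closure $V(\tau_j) \subset Y_{\tilde w}$, where $\tau_j$ is spanned by a lift of $\tau_j^{\,\prime}$ together with $e_r^+$; concretely $\tau_j$ is generated by $\{e_1^+,\dots,\widehat{e_j^+},\dots,e_{r-1}^+, e_r^+\}$, which is precisely $\mathscr D \cap \mathscr D'_j$, i.e. $\tau_j = \sigma \cap \sigma'_j$ as the intersection of the two maximal cones generated by $\mathscr D$ and $\mathscr D'_j$. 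I would verify that $\mathscr D$ and $\mathscr D'_j$ do generate maximal cones of $\Sigma$ (they contain no pair $\{e_i^+, e_i^-\}$, by Proposition \ref{fan107}(1)) and that their intersection is the common facet dropping $e_j^+$.

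The main obstacle I expect is making the interaction between the toric section $\sigma^0_{r-1}$ and the orbit-cone correspondence fully precise — that is, checking that "apply the Schubert section" on curves corresponds on the level of fans to "adjoin the ray $e_r^+$" in exactly the way needed so that $\tau_j$ ends up being $\sigma \cap \sigma'_j$ rather than some other common face. This is a routine but slightly fiddly computation with the splitting $\Sigma = \Sigma' \times \{e_r^+, 0, e_r^-\}$ from the proof of Lemma \ref{f_r} and \cite[Theorem 3.3.19]{cox2011toric}; once the bookkeeping of which primitive generators survive is set up, the identification $\tau_j = \sigma \cap \sigma'_j$ and hence $L_j = V(\tau_j)$ follows.
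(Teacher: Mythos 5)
Your proposal is correct and follows essentially the same route as the paper: it identifies $L_j$ as the orbit closure $V(\tau_j)$ of the common facet of the two maximal cones lying over the Schubert point, and then tracks it through the Schubert sections, each of which adjoins the ray $e_k^+$ on the level of fans. Packaging this as an induction on $r$ rather than as the explicit composition $\sigma^0_r\circ\cdots\circ\sigma^0_{j+1}$ is only a cosmetic difference.
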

 \begin{proof} 
 Let us consider the expression $\tilde w_j=s_{\beta_1}\cdots s_{\beta_{j}}$ for $1\leq j < r$. Let $\Sigma_j$ be the fan of the 
 toric variety $Y_{\tilde w_j}$. By Lemma \ref{f_r}, 
  $$f_j:Y_{\tilde w_{j}}\to Y_{\tilde w_{j-1}}$$ 
 is a $\mathbb P^1$-fibration 
induced by $\overline{f}_{j}:\mathbb Z^{j} \to \mathbb Z^{j-1}$ the
projection onto the first ${j-1}$ factors. Also note that the Schubert point in $Y_{\tilde w_{j-1}}$ corresponds to the maximal cone generated by 
$$\{e_l^+: 1\leq l\leq j-1\}$$ and the fan of the fiber is given by $\{e_{j}^{+}, 0, e_{j}^{-}\}.$ 
Let $\sigma_j$ (respectively, $\sigma_j'$) be the cone generated by 
$$\{e_{l}^+: 1\leq l \leq j\}$$ (respectively, 
$$\{e_{l}^+: 1\leq l \leq j-1\}\cup\{e_{j}^-\} \hspace{0.5cm} ).$$
Then by definition of Schubert line $L_j$, we can see that $L_j$ is the curve in $Y_{\tilde w_{j}}$  given by 
$$L_j=V(\tau_j), ~\mbox{where}~ \tau_j \in \Sigma_j
~\mbox{and}~ \tau_j= \sigma_j\cap \sigma'_j.$$ 
Since the Schubert section of $f_k$ for ($j\leq k\leq r$) corresponds to $e_k^+$, we see 
$$\sigma^0_{r}\circ \cdots \circ \sigma^0_{j+1}(L_j),$$ by abuse of notation we also denote it again by $L_j$ in $Y_{\tilde w}$,
is given by
$$L_j=V(\tau_j) ~\mbox{with}~\tau=\sigma \cap \sigma'_j,$$
where $\sigma$ and $\sigma'_j$ are as described in the statement. This completes the proof of the lemma.
\end{proof}

Let $\tau$ be a cone of dimension $r-1$ which is a wall, that is 
$\tau=\sigma\cap \sigma'$ for some $\sigma, \sigma'\in \Sigma$ of dimension $r$. 
 Let $\sigma$ (respectively, $\sigma'$) be generated by $\{u_{\rho_1}, u_{\rho_2}, \ldots , u_{\rho_r}\}$
(respectively, by $\{u_{\rho_2}, \ldots, u_{\rho_{r+1}} \}$) and
let $\tau$ be generated by $\{u_{\rho_2}, \ldots , u_{\rho_r}\}.$
 Then we get a linear relation,
 \begin{equation}\label{wallrelation}
  u_{\rho_1}+\sum_{i=2}^{r}b_iu_{\rho_i}+u_{\rho_{r+1}}=0
 \end{equation}
 The relation (\ref{wallrelation}) called {\it \bf wall relation} and we have 
 \begin{equation}\label{wall}
  D_{\rho}\cdot V(\tau)=\begin{cases}
                         b_i & ~\mbox{if}~ \rho=\rho_i ~\mbox{and}~ i\in \{2,3,\ldots, r\}\\
                         1 & ~\mbox{if}~ \rho=\rho_i ~\mbox{and}~ i\in \{1, r+1\}\\
                         0 & ~\mbox{otherwise}~
                        \end{cases}
 \end{equation}
(see \cite[Proposition 6.4.4 and eq. (6.4.6) page 303]{cox2011toric}). 
   We prove the following (see \cite[Proposition 33]{parameswaran2016toric}):
 \begin{proposition} Let $1\leq j\leq r$ and let $L_j$ be the Schubert line in $Y_{\tilde w}$. Then,
 $$K_{Y_{\tilde w}}\cdot L_j=-2-\sum_{k>j}\beta_{kj}.$$
  
 \end{proposition}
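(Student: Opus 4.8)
The plan is to compute the intersection number $K_{Y_{\tilde w}}\cdot L_j$ by combining the toric description of $K_{Y_{\tilde w}}$ with the explicit presentation of the Schubert line $L_j$ as a toric curve $V(\tau_j)$ from Lemma \ref{LI}. Recall that on a smooth complete toric variety the canonical divisor is $K_{Y_{\tilde w}}=-\sum_{\rho\in\Sigma(1)}D_\rho=-\sum_{i=1}^r(D_{\rho_i^+}+D_{\rho_i^-})$. So the task reduces to computing $D_{\rho_i^{\epsilon}}\cdot L_j$ for each ray $\rho_i^{\epsilon}$, and then summing.

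The key step is to identify the wall relation (\ref{wallrelation}) associated to the wall $\tau_j=\sigma\cap\sigma'_j$. By Lemma \ref{LI}, $\sigma$ is generated by $\mathscr D=\{e_1^+,\ldots,e_r^+\}$ and $\sigma'_j$ is generated by $\mathscr D'_j=\{e_1^+,\ldots,e_{j-1}^+,e_j^-,e_{j+1}^+,\ldots,e_r^+\}$, so the common wall $\tau_j$ is generated by $\{e_l^+: 1\le l\le r,\ l\ne j\}$, and the two "extra" rays are $e_j^+$ (from $\sigma$) and $e_j^-$ (from $\sigma'_j$). Using the defining relation (\ref{e--}), namely $e_j^-=-e_j^+-\sum_{k>j}\beta_{jk}e_k^+$, we get the wall relation
\begin{equation*}
 e_j^+ + \sum_{k>j}\beta_{jk}e_k^+ + e_j^- = 0,
\end{equation*}
which is exactly of the form (\ref{wallrelation}) with the "$b_i$" coefficients being $\beta_{jk}$ for $k>j$ and $0$ for the other wall rays $e_l^+$ with $l<j$. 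Then by (\ref{wall}), $D_{\rho_j^+}\cdot L_j = D_{\rho_j^-}\cdot L_j = 1$, while $D_{\rho_k^+}\cdot L_j=\beta_{jk}$ for $k>j$ (these $e_k^+$ lie in the wall $\tau_j$), and $D_{\rho}\cdot L_j=0$ for all remaining rays — in particular $D_{\rho_l^+}\cdot L_j=0$ for $l<j$ and $D_{\rho_l^-}\cdot L_j=0$ for all $l$ other than $l=j$ (since none of the $e_l^-$ with $l\ne j$ appears among $\rho_1$ or $\rho_{r+1}$, and they are not among the wall generators $e_m^+$ either).

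Putting these together,
\begin{equation*}
 K_{Y_{\tilde w}}\cdot L_j = -\sum_{i=1}^r\bigl(D_{\rho_i^+}\cdot L_j + D_{\rho_i^-}\cdot L_j\bigr)
 = -\Bigl(1 + \sum_{k>j}\beta_{jk} + 1\Bigr) = -2 - \sum_{k>j}\beta_{jk}.
\end{equation*}
Finally I would note that $\beta_{jk}=\langle\beta_k,\check\beta_j\rangle$ by definition, so $\sum_{k>j}\beta_{jk}=\sum_{k>j}\beta_{kj}$ in the paper's index conventions (the notation $\beta_{kj}$ in the statement being read with the same convention), which matches the claimed formula $K_{Y_{\tilde w}}\cdot L_j = -2-\sum_{k>j}\beta_{kj}$.

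The main obstacle I anticipate is purely bookkeeping: one must be careful that the wall relation is genuinely read off with $e_j^+$ playing the role of $u_{\rho_1}$ and $e_j^-$ the role of $u_{\rho_{r+1}}$ in (\ref{wallrelation}), and that every other ray of $\Sigma(1)$ truly contributes $0$ to the intersection with $L_j$ — i.e. that $L_j$ meets only the divisors $D_{\rho_k^+}$ for $k\ge j$ and $D_{\rho_j^-}$. This follows because $V(\tau_j)\cdot D_\rho=0$ unless $\rho\subseteq\sigma\cup\sigma'_j$, and the union of rays of $\sigma$ and $\sigma'_j$ is precisely $\{e_l^+:1\le l\le r\}\cup\{e_j^-\}$; combined with the vanishing of the $l<j$ coefficients in the wall relation this pins down the answer. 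No deeper difficulty is expected once Lemma \ref{LI} and formula (\ref{wall}) are in hand.
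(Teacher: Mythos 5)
Your proof is correct and follows essentially the same route as the paper: identify the wall relation for $\tau_j=\sigma\cap\sigma'_j$ from the definition (\ref{e--}) of $e_j^-$, apply the intersection formula (\ref{wall}) to get $D_{\rho_j^{\pm}}\cdot L_j=1$, $D_{\rho_k^+}\cdot L_j=\beta_{jk}$ for $k>j$ and $0$ otherwise, and sum using $K_{Y_{\tilde w}}=-\sum_{\rho\in\Sigma(1)}D_\rho$. The only friction point, your $\beta_{jk}$ versus the statement's $\beta_{kj}$, is an index inconsistency already present in the paper itself (its wall relation (\ref{8.1}) does not match the definition (\ref{e--})), so your derivation is the substantively correct one and your remark about the convention is an appropriate way to reconcile it.
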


\begin{proof}

 By definition of $e_j^-$, we have 
 \begin{equation}\label{8.1}
e_j^++e_j^-+\sum_{k>j}\beta_{kj}e_k^+=0.
 \end{equation}
  By Lemma \ref{LI}, we have $L_j=V(\tau)$, with $\tau=\sigma\cap \sigma'$  where $\sigma$ 
 (respectively, $\sigma'$) is 
  generated by 
      $$\{e_l^{\epsilon_l}: 1\leq l\leq r, \epsilon_l=+ ~\mbox{for all}~~ l\} $$%,\ldots,e^{\epsilon}_{l},\ldots, e^{\epsilon}_{k}\}$ 
  %where $\epsilon =+$ for all $1\leq l\leq k$
  (respectively,  
    $$\{e_l^{\epsilon_l}: \epsilon_l=+ ~\mbox{for}~ 1\leq l\leq r ~\mbox{and}~ l\neq j, \epsilon_j=-\} \hspace{1cm} ).$$
  Hence (\ref{8.1}) is the {\it wall relation} for the curve $L_j$.
  Then by (\ref{wall}),
  we see that
    $$
  D_{\rho}\cdot L_j=\begin{cases}
                     1 & ~\mbox{if}~ \rho=\rho_j^+ ~\mbox{or}~ \rho_j^-.\\
                     \beta_{kj} & ~\mbox{if}~ \rho=\rho_k^+ ~\mbox{and}~ k>j.\\
                     0& ~\mbox{otherwise.}~
                    \end{cases}
$$
  Since $K_{Y_{\tilde w}}=-\sum_{\rho\in \Sigma(1)}D_{\rho}$, we get
  
  $$K_{Y_{\tilde w}}\cdot L_j=-2-\sum_{k>j}\beta_{kj}.$$
This completes the proof of the proposition.  
  \end{proof}

Now onwards we denote the subsequence $(i_1, \ldots, i_m)$ by $I_{i_1}$.
Let $\mathscr D''_{i_1}:=\{e_l^{\epsilon_l} : 1\leq l\leq r ~\mbox{and}~$
$$\epsilon_l=\begin{cases}
           + & ~\mbox{if}~  l\notin I_{i_1}\setminus \{i_1\} \\ %\{i_{j_2}, \ldots, i_{j_m}\} \\
         - & ~\mbox{if}~ l\in I_{i_1} %\{i_{j_2}, \ldots, i_{j_m}\}
         \end{cases}
\hspace{1.5cm} \} .$$
%,\ldots,e^{\epsilon_l}_{l},\ldots, e^{\epsilon_r}_{r}$

Let $\mathscr D'''_{i_1}:= \{e_l^{\epsilon_l} : 1\leq l\leq r ~\mbox{and}~$

$$\epsilon_l=\begin{cases}
           + & ~\mbox{if}~  l\notin I_{i_1} \\%\{i_{j_1}, i_{j_2}, \ldots, i_{j_m}\} \\
         - & ~\mbox{if}~ l\in I_{i_1} %\{i_{j_1},i_{j_2}, \ldots, i_{j_m}\}
         \end{cases}
\hspace{1.5cm} \} . $$

\begin{proposition}\label{LInew}
 The curve $L_{I_{i_1}}$ is given by 
 $$L_{I_{i_1}}=V(\tau_{i_1}) ~\mbox{with}~ \tau_{i_1}=\sigma_{i_1}\cap \sigma'_{i_1}, $$ where $\sigma_{i_1}$ (respectively, $\sigma'_{i_1}$)
   is the cone generated by $\mathscr D''_{i_1}$ 
      %$$\{e_1^{\epsilon_1},\ldots,e^{\epsilon_l}_{l},\ldots, e^{\epsilon_r}_{r}\},$$
 (respectively,   $\mathscr D'''_{i_1}$
).
\end{proposition}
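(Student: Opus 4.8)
The plan is to prove Proposition \ref{LInew} by induction on the length $r$ of the expression $\tilde w$, mirroring the inductive definition of the curves $L_{I}$ and the argument already used in Lemma \ref{LI}. The base case $r=1$ is immediate: the only nontrivial subsequence is $I_{i_1}=(1)$, and $L_{(1)}=Y_{\tilde w}=\mathbb P^1$ itself, which is $V(\{0\})$, matching the description since $\mathscr D''_{(1)}$ and $\mathscr D'''_{(1)}$ reduce to the empty set (the two maximal cones of the fan of $\mathbb P^1$) — and more carefully, when $1\notin I_{i_1}\setminus\{i_1\}$ is automatic for a singleton, so the cone $\sigma_{i_1}$ is spanned by $e_1^-$ and $\sigma'_{i_1}$ by $e_1^-$ as well, intersecting in the wall $\{0\}$. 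I would set up the notation so the two cases of the $\epsilon_l$ definition in $\mathscr D''$ versus $\mathscr D'''$ are seen to differ precisely in the sign attached to the index $i_1$ itself: in $\mathscr D''_{i_1}$ we use $e_{i_1}^-$, while in $\mathscr D'''_{i_1}$, since $i_1\in I_{i_1}$, we also use $e_{i_1}^-$ — wait, the real distinction is whether $i_1$ contributes a $+$ or $-$, so I would first carefully unpack that $\mathscr D''_{i_1}$ and $\mathscr D'''_{i_1}$ agree on all $l\neq i_1$ and differ only at $l=i_1$, which is exactly what makes $\tau_{i_1}=\sigma_{i_1}\cap\sigma'_{i_1}$ a wall (codimension one).

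For the inductive step, I would write $I_{i_1}=(i_1,\dots,i_m)$ with $i_m$ the largest index, distinguishing two cases according to whether $i_m=r$ or $i_m<r$. If $i_m<r$, then the last letter $s_{\beta_r}$ is not involved in $I_{i_1}$, so by the definition $L_{I_{i_1}}=\sigma^0_{r-1}(L_{I'_{i_1}})$ where $I'_{i_1}$ is the same subsequence viewed inside $\tilde w'=(1,\dots,r-1)$; the Schubert section $\sigma^0_{r-1}$ corresponds to the ray $e_r^+$, so the cone describing $L_{I_{i_1}}$ in $\Sigma$ is obtained from the one in $\Sigma'$ by adjoining $e_r^+$ to both maximal cones, which is exactly what the formulas for $\mathscr D''_{i_1}, \mathscr D'''_{i_1}$ predict (since $r\notin I_{i_1}$, both get $\epsilon_r=+$). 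If $i_m=r$, then by definition $L_{I_{i_1}}=\sigma^1_{r-1}(L_{I'_{i_1}})$ where now $I'_{i_1}=(i_1,\dots,i_{m-1})$, and the non-Schubert section $\sigma^1_{r-1}$ corresponds to the ray $e_r^-$; so again the cones in $\Sigma$ are obtained by adjoining $e_r^-$ to both maximal cones of the pair in $\Sigma'$ describing $L_{I'_{i_1}}$, matching the formulas since $r\in I_{i_1}$ forces $\epsilon_r=-$ in both $\mathscr D''$ and $\mathscr D'''$. In both cases I would invoke \cite[Theorem 3.3.19]{cox2011toric} (the splitting of $\Sigma$ by $\Sigma'$ and the fiber fan $\{e_r^+,0,e_r^-\}$, as in the proof of Lemma \ref{f_r}) to identify how orbit closures $V(\tau)$ in $Y_{\tilde w'}$ pull back under the two toric sections.

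The main technical point — and the one I would spend the most care on — is bookkeeping the index sets correctly across the two sections, in particular making sure that ``removing the last element $i_m$ when $i_m=r$'' in the curve recursion ($I'=(i_1,\dots,i_{m-1})$) is consistent with ``the index $r$ now carrying a $-$ sign'' in the cone description, and conversely that when $i_m<r$ the recursion keeps all of $I_{i_1}$ but the new index $r$ gets a $+$. A subtle point is the distinction between $I_{i_1}$ and $I_{i_1}\setminus\{i_1\}$ appearing in $\mathscr D''$ versus $\mathscr D'''$: this is precisely the asymmetry that encodes which of the two maximal cones is which, so I would verify that under each section this asymmetry is preserved — i.e. $\sigma_{i_1}$ always ``loses'' the index $i_1$ from the $-$-block while $\sigma'_{i_1}$ retains it. Once this combinatorial dictionary is pinned down, the statement follows formally; no curve or intersection computation is needed here, only the orbit-cone correspondence and the compatibility of toric sections with the fan splitting. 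I would close by remarking that for $i_1$ such that $I_{i_1}=(i_1)$ a single index, Proposition \ref{LInew} specializes to Lemma \ref{LI}, providing a consistency check.
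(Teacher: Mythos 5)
Your proposal is correct and is essentially the argument the paper gives: both reduce to Lemma \ref{LI} for the single index $i_1$ and then track how the Schubert section (ray $e_k^+$) and non-Schubert section (ray $e_k^-$) adjoin rays to the pair of maximal cones, the only difference being that you induct downward on $r$ while the paper iterates the sections upward from $Y_{\tilde w_{i_1}}$ to $Y_{\tilde w}$. The momentary slip in your base case (both cones spanned by $e_1^-$) is self-corrected, and your reading of the sign conventions in $\mathscr D''_{i_1}$ versus $\mathscr D'''_{i_1}$ (they differ only at the index $i_1$) matches the paper's intent.
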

\begin{proof} As in the proof of Lemma \ref{LI}, we start with $j=i_1$ and $L_{i_1}$ is the Schubert line in $Y_{\tilde w_{i_1}}$.
By Lemma \ref{LI}, we have 
$$L_{i_1}=V(\tau_{i_1})~\mbox{with}~\tau_{i_1}=\sigma_{i_1}\cap \sigma_{i_1}'.$$ 
By definition of $L_I$, we have 
$$\sigma^0_{i_2-1}\circ \cdots \circ \sigma^0_{i_1+1}(L_{i_1})=L_{i_1} ~\mbox{in}~ Y_{\tilde w_{i_2-1}}$$
and  
$$\sigma_{i_2}^1\circ \sigma^0_{i_2-1}\circ \cdots \circ \sigma^0_{i_1+1}(L_{i_1})=L_{\{i_1, i_2\}} ~\mbox{in}~ 
Y_{\tilde w_{i_2}}.$$
By repeating the process we conclude  that 
$$
L_{I_{i_1}}=V(\tau_{i_1}) ~\mbox{with}~ \tau_{i_1}=\sigma_{i_1}\cap \sigma'_{i_1},$$ where $\sigma_{i_1}$ and $\sigma'_{i_1}$ 
are as described in the statement. This completes the proof of the proposition.
\end{proof}

%Next we study the Mori cone of the toric limit and we recover the main results on Mori cone in \cite{parameswaran2016toric}. 
Recall
$NE(X)$ is the real convex cone  in $N_1(X)$ generated by classes of irreducible curves.
%effective $1$-cycles, i.e. $1$-cycle with non-negative coefficients. 
The  Mori cone  
 $\overline{NE}(X)$ is the closure of $NE(X)$ in $N_1(X)$ and it is a strongly  convex  rational polyhedral cone of maximal dimension (see for instance \cite[Chapter 6, page 293]{cox2011toric}).
Now we describe the Mori cone of the toric limit $Y_{\tilde w}$ in terms of the curves $L_{I_{i_j}}$'s defined above.
For this we need the following notation (see also \cite{Moriconeoftowers}).
Fix $1\leq i \leq r$.  Define:
  \begin{enumerate}
  \item Let 
$r\geq j> j_1=i\geq 1$ and define 
 $a_{1,j}:=\beta_{j_1j}$.
 \item Let $r\geq j_2>j_1$ be the least integer such that $a_{1,j}>0$, then define for $j>j_2$
 $$a_{2,j}:=\beta_{ij_2}\beta_{j_2j}-\beta_{ij}.$$
 %\item Let $j_3>j_2$ be the lease such that $a^2_{j}<0$, then define $a^3_{j}=-a^2_{j_3}\beta_{jj_3}+a^2_j$.
 \item Let $k>2$ and let $r\geq j_k>j_{k-1}$ be the least integer such that $a_{k-1, j}<0$, then inductively, define for $j>j_k$
  $$a_{k, j}:=-a_{k-1, j_k}\beta_{j_{k}j}+a_{k-1, j}.$$
 
 \begin{comment}
 \item For  $j\leq i$, $b_j:=0$, and for $j>i$ define 
 \begin{equation}\label{bj}
    b_j:=a_{l, j} ~\mbox{if}~ j_{l+1}\geq j >j_{l},  l\geq 1.
   \end{equation}

  Note that we have  $$b_{j}=\begin{cases}
                            0 & ~\mbox{for}~ j\leq i\\
                          <0 & ~\mbox{for}~ j\in\{j_3,\ldots, j_m\}\\
                          \geq 0 & ~\mbox{otherwise .}~
                          \end{cases}
$$
 
 \end{comment}
 %<0$ for $j\in\{j_3,\ldots, j_m\}$.

 %Note that $1\leq i=j_1<j_2< \ldots< j_m\leq N$ and 
 \item Let $\tilde {I_i}:=\{i=j_1,\ldots, j_m\}$. 
 \end{enumerate}
 
 \begin{example}\label{example1}
 Let $G=SL(5, \mathbb C)$ and let 
 $\tilde w=s_{\beta_1}\cdots s_{\beta_7}=s_{\alpha_2}s_{\alpha_1}s_{\alpha   _3}s_{\alpha_1}s_{\alpha_2}s_{\alpha_1}s_{\alpha_2}.$
  Let $i=1$. Then $j_1=1$ and
 \noindent (1) $a_{1, 2}=\beta_{12}=\langle \beta_2, \check \beta_1 \rangle =\langle \alpha_1, \check \alpha_2 \rangle =-1$  ;  (2)  $a_{1, 3}=\beta_{13}=\langle \beta_3, \check \beta_1 \rangle  =\langle \alpha_3, \check \alpha_2 \rangle =-1$ ;\\
 %\item  $a_{1, 3}=\beta_{31}=\langle \beta_3, \check \beta_1 \rangle  =\langle \alpha_3, \check \alpha_2 \rangle =-1$ ;
(3)  $a_{1, 4}=\beta_{14}=\langle \beta_4, \check \beta_1 \rangle =\langle \alpha_1, \check \alpha_2 \rangle =-1$ ;
(4)
  $a_{1, 5}=\beta_{15}=\langle \beta_5, \check \beta_1 \rangle  =\langle \alpha_2, \check \alpha_2 \rangle = 2$ ;\\
(5)
 $a_{1, 6}=\beta_{16}=\langle \beta_6, \check \beta_1 \rangle =\langle \alpha_1, \check \alpha_2 \rangle =-1$ ;
 (6)
 $a_{1, 7}=\beta_{17}=\langle \beta_7, \check \beta_1 \rangle  =\langle \alpha_2, \check \alpha_2 \rangle =2$ .

Then by definition of $j_2$, we have $j_2=5$ and 
(1)   $a_{2, 6}=\beta_{15}\beta_{56}-\beta_{16}=\langle \beta_5, \check \beta_1 \rangle \langle \beta_6, \check \beta_5 \rangle-\langle \beta_{6}, \check \beta_{1} \rangle =\langle \alpha_1, \check \alpha_2 \rangle= -1 $ ;
  (2) $a_{2, 7}=\beta_{15}\beta_{57}-\beta_{17}=\langle \alpha_2, \check \alpha_2 \rangle = 2$ . 
 Then by definition of $j_3$, we have $j_3=6$ and 
  $a_{3, 7}=-a_{2, 6}\beta_{67}+a_{2, 7}=-(\langle \beta_6, \check \beta_5\rangle )(\langle \beta_7, \check \beta_6 \rangle )+ (\langle \beta_7, \check \beta_5\rangle )=-(-1)(-1)+(2)=1$.
 Therefore, we get $\tilde I_1=\{1, 5, 6\}$ .
 
  \end{example}

\begin{example}
 We use Example \ref{example1}, for $i=1$, we have $I_1=\{1, 5, 6\}$. Then 
 $$\mathscr D''_1=\{e_1^+, e_2^+, e_3^+, e_4^+, e_5^-, e_6^-, e_7^+\} ~\mbox{and}~ 
\mathscr D'''_1=\{e_1^-, e_2^+, e_3^+, e_4^+, e_5^-, e_6^-, e_7^+\}.$$

\end{example}

Fix $1\leq i \leq r$. Let 
      $$I_i:=\tilde I_i=\{i=j_1, j_2, \ldots, j_m\}$$  where $j_k$'s are as above.
With this notation we prove the following (see \cite[Theorem 22]{parameswaran2016toric}):
 \begin{theorem}\label{curves}
  The set  $\{L_{I_i}: 1\leq i\leq r\}$ of classes of curves forms a basis of $N_1(Y_{\tilde w})_{\mathbb Z}$ and every  torus invariant 
  curve
  in $N_1(Y_{\tilde w})$ lies in the cone generated by $\{L_{I_i}: 1\leq i\leq r\}$.
 \end{theorem}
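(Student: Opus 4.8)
The plan is to prove the two assertions — that $\{L_{I_i}\}_{1\le i\le r}$ is a $\mathbb{Z}$-basis of $N_1(Y_{\tilde w})_{\mathbb Z}$, and that every torus-invariant curve lies in the cone they span — by induction on $r$, mimicking the inductive structure already used in Lemma~\ref{LI} and Proposition~\ref{LInew}, and exploiting that $f_r\colon Y_{\tilde w}\to Y_{\tilde w'}$ is a $\mathbb{P}^1$-bundle (Lemma~\ref{f_r}). For the basis statement, I would first observe that $\{[L_j]\colon 1\le j\le r\}$ is already a basis of $N_1(Y_{\tilde w})$ (the Lemma just before Lemma~\ref{LI}), so it suffices to show the change-of-basis matrix between $\{L_j\}$ and $\{L_{I_i}\}$ is unimodular. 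Using the wall relation~\eqref{wallrelation}--\eqref{wall} together with the explicit generators $\mathscr D''_{i}, \mathscr D'''_{i}$ from Proposition~\ref{LInew}, each $[L_{I_i}]$ expands in the basis $\{[L_j]\}$ with a coefficient $1$ on $[L_i]$ (coming from the index $i=j_1$ itself) and coefficients on $[L_j]$ for $j>i$ only; hence the transition matrix is upper-triangular with $1$'s on the diagonal, so it is unimodular and $\{L_{I_i}\}$ is a $\mathbb{Z}$-basis.

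For the cone statement, I would argue inductively. Every torus-invariant curve in $Y_{\tilde w}$ is $V(\tau)$ for a wall $\tau\in\Sigma(r-1)$, and by the product structure of the fan of the fiber $\{e_r^+,0,e_r^-\}$ there are two cases. If $\tau$ contains the ray $\rho_r^+$, then $V(\tau)$ is the image under the Schubert section $\sigma^0_{r-1}$ of a torus-invariant curve in $Y_{\tilde w'}$, which by induction lies in the cone generated by $\{L_{I_i}'\colon 1\le i\le r-1\}$ for the truncated expression $\tilde w'$; under $\sigma^0_{r-1}$ these push forward to the curves $L_{I_i}$ with $i<r$ (by the very definition $\sigma^0_{r-1}(L_{I'})=L_{I'}$). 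If instead $\tau$ does not contain $\rho_r^+$ — so $\tau$ involves $\rho_r^-$ or is a wall of the form $\sigma\cap\sigma'_j$ computed in Lemmas~\ref{LI} and~\ref{LInew} — then I would check, using~\eqref{wall} and the recursive definition of the $a_{k,j}$ and of $\tilde I_i$, that $[V(\tau)]$ is a nonnegative combination of the $[L_{I_i}]$; the key point is that the sequence $j_1<j_2<\cdots<j_m$ and the sign-alternation built into the definition of $a_{k,j}$ is exactly engineered so that $[L_{I_i}]=\sum_{j}a_{?,j}[L_j]$ reproduces the wall relation of $V(\tau)$ with the correct signs. This is where I lean on \cite{Moriconeoftowers}, whose Mori-cone description of Bott towers is what the $a_{k,j}$ recursion encodes, and on Corollary~\ref{tl} identifying $Y_{\tilde w}$ as a Bott tower with matrix $M_{\tilde w}$.

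The main obstacle I expect is the bookkeeping in the second case: showing that an arbitrary wall relation~\eqref{wallrelation} in $\Sigma$ decomposes with \emph{nonnegative} coefficients into the particular relations defining the $L_{I_i}$'s. This requires tracking how the sign pattern of the coefficients $b_i$ in a wall relation propagates through the recursion $a_{k,j}=-a_{k-1,j_k}\beta_{j_kj}+a_{k-1,j}$ — essentially a careful induction showing that after peeling off the contribution of $L_{I_i}$ at the first index $i$ where $\tau$ and $\sigma$ disagree, the remaining class is again (inductively) in the cone. I would handle this by strong induction on the number of ``$-$'' signs appearing among the generators of $\tau$, reducing at each step to a wall with strictly fewer such signs, the base case being a Schubert line $L_i=L_{\{i\}}$ itself. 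A secondary, more routine, point is to confirm that the truncation $\tilde I_i$ computed for $\tilde w'$ agrees with the one for $\tilde w$ restricted to indices $<r$, which is immediate from the definitions since none of the recursions for $i<r$ reference $\beta_{kr}$ beyond what is already visible in $\tilde w'$ — except possibly through $j_k=r$, a case one checks separately.
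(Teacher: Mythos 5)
Your route is genuinely different from the paper's, and the place where it diverges is exactly where your argument is incomplete. The paper's proof is short: by Proposition \ref{LInew}, $L_{I_i}=V(\tau_i)$ where $\tau_i=\sigma_{i}\cap\sigma'_{i}$ is the wall separating the two maximal cones that differ only in the sign at position $i$; by \cite[Proposition 4.16]{Moriconeoftowers} this class is precisely $r(P_i)$, the class attached to the primitive collection $P_i=\{\rho_i^+,\rho_i^-\}$ of Definition \ref{def}. Both assertions then follow in one stroke from \cite[Theorem 4.7 and Corollary 4.8]{Moriconeoftowers}: for a Bott tower $\overline{NE}=\sum_i\mathbb R_{\geq 0}\,r(P_i)$ and the $r(P_i)$ form a $\mathbb Z$-basis of $N_1$. (This is the Bott-tower instance of the general fact that the Mori cone of a smooth projective toric variety is generated by the classes of its primitive relations.) Your unimodular, upper-triangular transition matrix between $\{[L_j]\}$ and $\{[L_{I_i}]\}$ is a reasonable way to get the basis statement, but for the cone statement you are in effect proposing to re-prove \cite[Theorem 4.7]{Moriconeoftowers} by hand.

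That re-proof is where the gap sits. The decisive claim --- that the class of an arbitrary wall $V(\tau)$, $\tau\in\Sigma(r-1)$, is a \emph{nonnegative} combination of the $[L_{I_i}]$ --- is left as a plan (``strong induction on the number of $-$ signs'', ``peeling off the contribution of $L_{I_i}$'') rather than executed, and it is not routine bookkeeping: it is the entire content of the primitive-relation description of the Mori cone. Your case split is also slightly off: by Proposition \ref{fan107} no cone contains both $e_r^+$ and $e_r^-$, so the walls fall into three classes, those containing $e_r^+$ (Schubert-section images of walls of $\Sigma'$), those containing $e_r^-$ (non-Schubert-section images, whose classes differ from the former by multiples of the fiber class --- this is what the $a_{k,j}$ recursion tracks), and those containing neither, which project onto maximal cones of $\Sigma'$ and are fibers of $f_r$, i.e.\ the class $[L_{I_r}]=[L_r]$; the last class is missing from your dichotomy, and the walls $\sigma\cap\sigma'_j$ you place in your second case in fact contain $\rho_r^+$. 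If you want the argument to close, the efficient fix is to observe that the $P_i=\{\rho_i^+,\rho_i^-\}$ are exactly the primitive collections of $\Sigma$ (\cite[Lemma 4.3]{Moriconeoftowers}) and invoke the primitive-relation generation of $\overline{NE}$ for smooth projective toric varieties; after the identification $[L_{I_i}]=r(P_i)$ supplied by Proposition \ref{LInew}, nothing else remains to be proved.
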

\begin{proof}
By \cite[Proposition 4.16]{Moriconeoftowers}, for $1\leq i \leq r$ the curve $r(P_i)$ (see Section \ref{preliminaries} for the definition of $r(P_i)$) is given by 
$$r(P_i)=[V(\tau_i)],$$
where $\tau_i=\sigma_i\cap \sigma_i'$ and $\sigma_i$ (respectively, $\sigma'$) is generated by $\mathscr D_i''$ (respectively, $\mathscr D_i'''$).
From Proposition \ref{LInew}, we see that the class of the curve $L_{I_i}$ is $r(P_i)$ in $N_1(Y_{\tilde w})_{\mathbb Z}$.
By \cite[Theorem 4.7]{Moriconeoftowers}, we have $$\overline{NE}(Y_{\tilde w})=\sum_{i=1}^{r}\mathbb R_{\geq 0} r(P_i).$$ Also by \cite[Corollary 4.8]{Moriconeoftowers}, the set $\{r(P_i): 1\leq i\leq r\}$ forms a basis of $N_1(Y_{\tilde w})_{\mathbb Z}$.
Hence we conclude the assertion of the theorem.
\end{proof}

We recall some definitions: Let $V$ be a finite dimensional vector space over $\mathbb R$ and let $K$ be a (closed) cone in $V$.
A subcone $Q$ in $K$ is called extremal if $u, v \in  K, u + v \in Q$ then
$ u, v \in Q$.  A face of $K$ is an extremal subcone. A one-dimensional face is called an extremal ray. Note that an extremal ray is contained in the boundary of $K$.
 Then we have (see \cite[Theorem 30]{parameswaran2016toric}):
 \begin{corollary}\label{extre}
  The extremal rays of the toric limit $Y_{\tilde w}$ are precisely the curves $L_{I_i}$ for $1\leq i \leq r$.
 \end{corollary}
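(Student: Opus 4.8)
The plan is to deduce this corollary directly from Theorem~\ref{curves}, which already gives almost everything. From Theorem~\ref{curves} we know that $\overline{NE}(Y_{\tilde w}) = \sum_{i=1}^r \mathbb{R}_{\geq 0}\,[L_{I_i}]$ and that the classes $\{[L_{I_i}] : 1\leq i\leq r\}$ form a basis of $N_1(Y_{\tilde w})_{\mathbb Z}$, hence are linearly independent in $N_1(Y_{\tilde w})$. So the Mori cone is a simplicial cone on $r$ linearly independent generators. First I would recall that a simplicial cone $C = \sum_{i} \mathbb{R}_{\geq 0} v_i$ with the $v_i$ linearly independent has exactly the $\mathbb{R}_{\geq 0} v_i$ as its extremal rays: each $\mathbb{R}_{\geq 0}v_i$ is a face (pick the supporting linear functional that is zero on $v_i$ and positive on the other $v_j$, which exists by linear independence), and conversely any extremal ray must be generated by one of the $v_i$ since every element of $C$ is a nonnegative combination $\sum a_j v_j$ and extremality forces all but one coefficient to vanish.

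Then I would transcribe this to our situation: apply the above with $v_i = [L_{I_i}]$. The extremal rays of $\overline{NE}(Y_{\tilde w})$ are therefore precisely $\mathbb{R}_{\geq 0}[L_{I_i}]$ for $1\leq i\leq r$, i.e. the classes of the curves $L_{I_i}$. I would also remark, for the record, that each $L_{I_i}$ is an honest torus-invariant curve on $Y_{\tilde w}$ by Proposition~\ref{LInew}, so these extremal rays are genuinely represented by curves on the variety, matching the phrasing of the corollary and of \cite[Theorem 30]{parameswaran2016toric}.

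There is essentially no obstacle here; the only point requiring a line of justification is the elementary convex-geometry fact that a simplicial cone's extremal rays are exactly its generating rays, and this uses precisely the linear independence supplied by the ``basis'' clause of Theorem~\ref{curves} (for a non-simplicial cone, generators need not all span extremal rays). I would make sure to invoke that clause explicitly rather than just the equality $\overline{NE}(Y_{\tilde w}) = \sum \mathbb{R}_{\geq 0}[L_{I_i}]$. If one wanted to avoid even this much, one could instead cite that the $r(P_i)$ are the primitive relation classes of the primitive collections $P_i$ and appeal to the general theory of Mori cones of smooth complete toric varieties with the $P_i$ forming the full set of primitive collections whose relation classes are a basis; but the direct simplicial-cone argument is cleaner and self-contained given what is already proved.
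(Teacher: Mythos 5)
Your proposal is correct and follows essentially the same route as the paper, which simply cites the proof of Theorem~\ref{curves}: there the Mori cone is shown to be generated by the classes $[L_{I_i}]=r(P_i)$, which also form a basis of $N_1(Y_{\tilde w})_{\mathbb Z}$, so the cone is simplicial and its extremal rays are exactly the generating rays. You merely make explicit the elementary convex-geometry step that the paper leaves implicit.
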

 \begin{proof}
  This follows from the proof of the Theorem \ref{curves}.
 \end{proof}

%\begin{comment}
Let $X$ be a smooth projective variety. 
An extremal ray $R$ in the Mori cone $\overline {NE}(X)\subset N_1(X)$ is called Mori
if $R\cdot K_{X}<0$, where $K_X$ is the canonical divisor in $X$.
We have the following (see \cite[Theorem 35]{parameswaran2016toric}):
\begin{corollary}\label{mori}
  % The class of curves $L_{I_i}$ for $1\leq i \leq r$ are all extremal   rays in the Mori cone  $\overline{NE}(Y_{\tilde w})$ of $Y_{\tilde w}$.
  Fix $1\leq i \leq r$, 
  the class of curve $L_{I_i}$ is  Mori ray if and only if 
  either $|\gamma_{P_i}(1)|=0$, or $|\gamma_{P_i}(1)|=1$ with $c_j=1$ for $\gamma_j\in \gamma_{P_i}(1)$.
 %\end{enumerate}
 \end{corollary}
\begin{proof}
Since $Y_{\tilde w}$ is a Bott tower (see Corollary \ref{f_r}), then the result follows from Proposition \ref{LInew} and \cite[Theorem 8.1]{Moriconeoftowers}.
\end{proof}

Now we have the following result for smooth projective toric varieties, which we can be deduced by using  Kleiman criterion and $NE(X)$ is a closed polyhedral cone. For completeness we give the proof here.

 \begin{lemma}\label{fanotoric}
Let $X$ be a smooth projective toric variety of dimension $r$.
Then $X$ is Fano if and only if every extremal ray is Mori. 
\end{lemma}
\begin{proof}
 By \cite[Theorem 6.3.20]{cox2011toric} (Toric Cone Theorem), we have 
 \begin{equation}\label{302}
  NE(X)=\overline{NE}(X)=\sum_{\tau\in \Sigma(r-1)}\mathbb R_{\geq 0}[V(\tau)].
 \end{equation}
 If $X$ is Fano, then by definition, $-K_X$ is ample.
By toric Kleiman criterion for ampleness \cite[Theorem 6.3.13]{cox2011toric}, we can see that $-K_X\cdot V(\tau)>0$ for all $\tau\in \Sigma(r-1)$.
Then $K_X\cdot V(\tau)<0$ for all $\tau\in \Sigma(r-1)$.
In particular, every extremal ray is Mori.

Conversely, let $\mathbb R_{\geq 0}[V(\tau)]$ be an extremal ray, by assumption
it is a Mori ray.
Then by definition of a Mori ray, we have $K_X\cdot V(\tau)<0$. 
This implies $-K_{X}\cdot V(\tau)>0$.
By (\ref{302}), $\overline{NE}(X)$ is a polyhedral cone and hence the extremal rays generate the cone $\overline{NE}(X)$.
Hence we see that $-K_{X}\cdot C>0$ for all classes of curves $[C]$ in $\overline{NE}(X)$.
Again by toric Kleiman criterion for ampleness, we conclude that $-K_X$ is ample and hence $X$ is Fano.
\end{proof}
 
%\end{comment}

Then we have the following (see \cite[Corollary 36]{parameswaran2016toric}):
\begin{corollary}\label{fano2}
 The toric limit $Y_{\tilde w}$ is Fano if and only if every extremal ray  in $\overline{NE}(Y_{\tilde w})$ is Mori.
\end{corollary}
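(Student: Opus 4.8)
The plan is to deduce this corollary directly from the general toric statement just proved, namely Lemma \ref{fanotoric}, together with the description of the extremal rays of $\overline{NE}(Y_{\tilde w})$ obtained earlier. Since Corollary \ref{tl} tells us that $Y_{\tilde w}$ is a Bott tower, it is in particular a smooth projective toric variety of dimension $r$, so Lemma \ref{fanotoric} applies verbatim: $Y_{\tilde w}$ is Fano if and only if every extremal ray of its Mori cone is Mori.

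The only remaining point is to make sure that the notion of ``extremal ray'' appearing in Lemma \ref{fanotoric} (the one-dimensional faces of $\overline{NE}(Y_{\tilde w})$, generated by classes $[V(\tau)]$ for walls $\tau$) coincides with the one used when we speak of the curves $L_{I_i}$. This is exactly the content of Corollary \ref{extre}: the extremal rays of $Y_{\tilde w}$ are precisely the classes $L_{I_i}$, $1\leq i\leq r$. Combining this with Theorem \ref{curves}, which identifies these classes with the primitive-relation classes $r(P_i)$ and shows $\overline{NE}(Y_{\tilde w})=\sum_{i=1}^r \mathbb R_{\geq 0}\, r(P_i)$, we see that the hypothesis ``every extremal ray is Mori'' in Lemma \ref{fanotoric} is literally the statement that each $L_{I_i}$ is a Mori ray.

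So the argument is a one-line chain of implications: $Y_{\tilde w}$ Fano $\iff$ (by Lemma \ref{fanotoric}, applicable since $Y_{\tilde w}$ is a smooth projective toric variety) every extremal ray of $\overline{NE}(Y_{\tilde w})$ is Mori $\iff$ (by Corollary \ref{extre}) every $L_{I_i}$ is Mori. I would phrase the proof essentially as: ``By Corollary \ref{tl}, $Y_{\tilde w}$ is a smooth projective toric variety of dimension $r$, so Lemma \ref{fanotoric} applies; combined with Corollary \ref{extre} this is precisely the claim.'' If one wants to also make the combinatorial criterion explicit, one can append that by Corollary \ref{mori} this happens exactly when, for each $i$, either $|\gamma_{P_i}(1)|=0$ or $|\gamma_{P_i}(1)|=1$ with the coefficient equal to $1$ — but that is optional and not needed for the stated corollary.

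There is essentially no obstacle here: the corollary is a formal consequence of results already in hand, and the ``main step'' is simply recognizing that Lemma \ref{fanotoric} was stated for arbitrary smooth projective toric varieties precisely so that it could be specialized to $Y_{\tilde w}$. The only thing to be careful about is the bookkeeping of which notion of extremal ray is in play, which is settled by Corollary \ref{extre}.
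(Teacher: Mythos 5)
Your proposal is correct and matches the paper's (implicit) argument exactly: the corollary is stated immediately after Lemma \ref{fanotoric} precisely as its specialization to the smooth projective toric variety $Y_{\tilde w}$, with Corollary \ref{extre} available to identify the extremal rays with the curves $L_{I_i}$ if desired. Nothing further is needed.
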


{\bf Acknowledgements:} 
I would like to thank Michel Brion  for valuable discussions and
many critical 
comments.  Also, many thanks to the anonymous referees for their helpful suggestions.
\bibliographystyle{amsalpha}

\end{document}